\newtheorem{theorem}{Theorem}
\newtheorem{proposition}[theorem]{Proposition}
\newtheorem{lemma}[theorem]{Lemma}
\newtheorem{corollary}[theorem]{Corollary}
\theoremstyle{definition}
\newtheorem{definition}[theorem]{Definition}
\newtheorem{example}[theorem]{Example}
\newtheorem{remark}[theorem]{Remark}
\newtheorem{question}[theorem]{Question}
\newcommand{\defin}[1]{\emph{#1}}
\newcommand{\setN}{\mathbb{N}}
\newcommand{\setZ}{\mathbb{Z}}
\newcommand{\setR}{\mathbb{R}}
\newcommand{\avec}{\mathbf{a}}
\newcommand{\xvec}{\mathbf{x}}
\newcommand{\yvec}{\mathbf{y}}
\newcommand{\zvec}{\mathbf{z}}
\newcommand{\tvec}{\mathbf{t}}
\newcommand{\polyP}{\mathcal{P}}
\newcommand{\ssyt}{\textsc{ssyt}}
\newcommand{\svt}{\textsc{svt}}
\newcommand{\spyt}{\textsc{spyt}}
\newcommand{\rpp}{\textsc{rpp}}
\newcommand{\keytab}{\textsc{ktab}}
\newcommand{\ssaf}{\textsc{ssaf}}
\newcommand{\nawf}{\textsc{nawf}}
\newcommand{\key}{\mathcal{K}}
\newcommand{\atom}{\mathcal{A}}
\newcommand{\schur}{\mathrm{s}}
\newcommand{\schurSp}{\mathrm{sp}}
\newcommand{\grothG}{\mathrm{G}}
\newcommand{\grothg}{\mathrm{g}}
\newcommand{\schubert}{\mathfrak{S}}
\newcommand{\symS}{S}
\newcommand{\stat}{\sigma}
\newcommand{\statT}{\tau}
\newcommand{\tabFamT}{\mathcal{T}}
\DeclareMathOperator{\dn}{dn}
\DeclareMathOperator{\coinv}{coinv}
\title{Polynomials defined by tableaux and linear recurrences}
\author{Per Alexandersson}
\keywords{Linear recurrences, Schur polynomials, Key polynomials, Demazure characters,
Demazure atoms, Schubert polynomials, Grothendieck polynomials, Hall--Littlewood polynomials, Young tableaux}
\subjclass[2010]{05E10,05E05}
\begin{document}

\begin{abstract}

We show that several families of polynomials defined via fillings of diagrams
satisfy linear recurrences under a natural operation on the shape of the diagram.
We focus on \emph{key polynomials}, (also known as Demazure characters),
and \emph{Demazure atoms}. The same technique can be applied to Hall--Littlewood polynomials and
dual Grothendieck polynomials.

The motivation behind this is that such recurrences are strongly connected with other nice properties,
such as interpretations in terms of lattice points in polytopes and divided difference operators.
\end{abstract}

\maketitle

\setcounter{tocdepth}{1}
\tableofcontents

\section{Introduction}

Using a similar technique as in \cite{Alexandersson20141}, we provide a framework for showing that
under certain conditions, polynomials encoding statistics on certain tableaux, or fillings of diagrams,
satisfy a linear recurrence. We prove that several of the classical polynomials from representation theory
fall into this category, such as (skew) Schur polynomials and Hall--Littlewood polynomials.

The main concern in this paper are the so called \emph{key polynomials}, indexed by integer partitions,
and \emph{atoms}. The key polynomials are natural, non-symmetric generalizations of Schur polynomials
and are specializations of the non-symmetric integer form Macdonald polynomials, see \cite{Mason2009} for details.

Let $\lambda$ be a fixed diagram shape, (a partition shape, skew shape, \emph{etc.})
and let $P_{k \lambda}(\xvec)$, $k=1,2,\dots,$ be a sequence of polynomials
which are generating functions of fillings of shape $k\lambda$.
For partitions, $k\lambda$ is simply elementwise multiplication by $k$.
There are several reasons why one would be interested in showing that a such sequence satisfies a linear recurrence:

\begin{enumerate}
 \item To obtain hints about the existence or non-existence of formulas of certain type.
For example, the Weyl determinant formula for Schur polynomials implies that the ordinary Schur polynomials satisfy a linear recurrence.

\item To obtain evidence for alternative combinatorial interpretations of the tableaux involved.
For example, the skew Schur polynomials can be obtained as lattice points in certain marked order polytopes,
called Gelfand--Tsetlin polytopes. Such a polytope interpretation implies the existence of
a linear recurrence relation. 

\item To prove polynomiality in $k$ of the number of fillings of shape $k\lambda$.

\item To obtain results about asymptotics. For example, in  \cite{Alexandersson12schur} we used such recurrences
to give a new proof of a classical result on asymptotics of eigenvalues of Toeplitz matrices.
\end{enumerate}

In the last section, we provide several examples of polynomials that satisfy such linear recurrences.
We also sketch two additional proofs in the case of key polynomials, to illustrate that
several nice properties imply the existence of a linear recurrence relation.
These methods are based on a lattice-point representation and an operator characterization of the key polynomials.
There is no straightforward way to check if a family of polynomials have such characterizations, but it is easy to
generate computer evidence that a sequence of polynomials satisfy a linear recurrence. 
Thus, proving the existence or non-existence of linear recurrence relations is an informative step towards
alternative combinatorial descriptions of the family of polynomials.

\subsection*{Acknowledgements}
The author would like to thank Jim Haglund for suggesting this problem.
This work has been funded by the Knut and Alice Wallenberg Foundation.

\section{Diagrams and fillings}

A \defin{diagram} $D$ is a subset of $\{(i,j) : i,j \geq 1\}$ which realized as an arrangement of \defin{boxes},
with a box at $(i,j)$ for every $(i,j)$ in $D$. Here, $i$ refers to the \defin{row} and $j$ is the \defin{column}
of box $(i,j)$ and we draw diagrams in the English notation.
For example, $D=\{(1,1),(1,3),(1,4),(2,2),(3,2)\}$ is shown as
\[
\ytableausetup{centertableaux,boxsize=1.2em}
\begin{ytableau}
 \; & \none & \; & \; \\
 \none  & \; \\
\none  & \; \\
\end{ytableau}.
\]
A filling is said to have $l$ rows, if row $l$ contains a box, but every row below row $l$ is box-free.

Given an integer composition $\alpha = (\alpha_1,\dots,\alpha_l)$, the \defin{diagram of shape $\alpha$}, $D_\alpha$,
is given by
$D_\alpha = \{(i,j) : 1\leq j \leq \alpha_i, 1\leq i \leq l \}.$
If $\beta = (\beta_1,\dots,\beta_l)$ is another integer composition such that $\alpha \supseteq \beta$, that is,
$\alpha_i \geq \beta_i$ for all $i$, then the \defin{diagram of shape $\alpha/\beta$}
is given by the set-theoretical difference $D_\alpha \setminus D_\beta$, and is denoted $D_{\alpha/\beta}$.
Finally, if $D$ is a diagram, let $kD$ be the diagram obtained from $D$ by repeating each column in $D$ $k$ times,
that is,
\[
kD = \bigcup_{(i,j)\in D} \{(i,kj-k+1),(i,kj-k+2),\dots,(i,kj)\}.
\]
Note that $kD_{\alpha/\beta} = D_{k\alpha/k\beta}$.

\subsection{Fillings}

A \defin{filling} of a diagram is a map $T: D \to \setN$, which we represent by writing $T(i,j)$ in the box $(i,j)$.
For example,
\begin{equation}
\ytableausetup{centertableaux,boxsize=1.2em}
\begin{ytableau}
 1 & 8 \\
 \none[\times]  & \none[\times] & 9 & 1 \\
\none[\times]  & \none[\times]  & 5 \\
\none[\times] \\
\none[\times]  &\none[\times]  &\none[\times]  & 9 & 2 & 7 \\
\end{ytableau}\label{eq:exampleFilling}
\end{equation}
is a filling of the diagram with shape $(2,4,3,1,6)/(0,2,2,1,3)$,
where the places marked $\times$ correspond to boxes in $D_\beta$.
The shape of a filling refers to the shape of the underlying diagram.

The $j$th column in a diagram $D$ with $l$ rows has a shape define as the integer composition $(s_1,\dots,s_l)$,
where $s_i = 1$ if $(i,j) \in D$ and $0$ otherwise.
Thus, if $\alpha$ is an integer composition with only $0$ and $1$ as parts,
then the first column of $D_\alpha$ has shape $\alpha$.
Whenever $\alpha$ is an integer partition, $D_\alpha$ is called a Young diagram
and any filling of a Young diagram is called a \defin{tableau}.
A filling with shape $\alpha/\beta$ where both $\alpha$ and $\beta$ are partitions,
is called a \defin{skew tableau}. 

Given a diagram or a filling, we can \defin{duplicate} or \defin{delete} columns.
For example, deleting the fourth column and duplicating the third
column in the filling in \cref{eq:exampleFilling} results in the filling
\begin{equation}
\ytableausetup{centertableaux,boxsize=1.2em}
\begin{ytableau}
 1 & 8 \\
 \none[\times]  & \none[\times] & 9 & 9 & 9\\
\none[\times]  & \none[\times]  & 5 & 5 & 5 \\
\none[\times] \\
\none[\times]  &\none[\times]  &\none[\times] &\none[\times] &\none[\times]   & 2 & 7 \\
\end{ytableau}.
\end{equation}
Note that if the original filling $T$ has shape $D_{\alpha/\beta}$,
then duplication and deletion on $T$ will result in some $T'$ of shape $D_{\alpha'/\beta'}$.
This is straightforward to prove.

\subsection{Column-closed families of fillings}

In most applications, one are interested in a restricted family of fillings, perhaps tableaux or skew tableaux,
together with some conditions on the numbers that appear in the boxes. 
Note that a filling $T$ can be viewed as a concatenation of its columns --- some of which might be empty.
Obviously, $T$ can only be expressed in one such way if we require that the last (rightmost) column is non-empty.

Let $(C_1,\dots,C_l)$ be a filling with columns $C_1,\dots,C_l$ and let $(m_1 C_1,\dots, m_lC_l)$
denote the filling with $m_1$ copies of the column $C_1$, followed by $m_2$ copies of $C_2$ and so on.
Finally, the concatenation, $\sim$, of two fillings $(C_1,\dots,C_l)$ and $(C'_1,\dots,C'_{l'})$
is simply given by
\[
(C_1,\dots,C_l) \sim (C'_1,\dots,C'_{l'}) = (C_1,\dots,C_l,C'_1,\dots,C'_{l'}).
\]

\begin{definition}\label{def:column-closed}

A family of fillings, $\tabFamT$,  is said to be \defin{weakly column-closed} if
\begin{equation}\label{eq:weaklycc}
 (C_1,\dots,C_i,\dots,C_l) \in \tabFamT \text{ if and only if } (m_1 C_1,\dots, m_lC_l) \in \tabFamT
\end{equation}
holds for every combination of integers $m_i$ where $m_i \geq 1$.
The family $\tabFamT$ is said to be \defin{strictly column-closed}
if \cref{eq:weaklycc} holds for every combination where $m_i \geq 0$.
That is, the family is closed under deletion of \emph{any} column.
\end{definition}
Less formally, $\tabFamT$ is weakly column-closed if it is closed under column duplication,
and reduction of \emph{duplicate} columns. The family is strictly column closed if it, in addition,
is closed under removal of \emph{any} column.

\medskip 

Combinatorial objects would not be interested if it weren't for combinatorial statistics.
A \defin{combinatorial statistic} on a family $\tabFamT$ is a map $\stat: \tabFamT \to \setN^s$.
We will study a special type of statistics on fillings:
\begin{definition}\label{def:affine-statistic}
A combinatorial statistic $\stat$ on a weakly column-closed family $\tabFamT$ is \defin{affine} if
\[
\stat(m_1 C_1,\dots, m_lC_l) = A + S_1m_1 + S_2m_2 + \dots + S_lm_l
\]
for all choices of $m_i \geq 1$, where $A$ and $S_i$ are vectors in $\setN^s$.
Similarly, $\sigma$ defined on a strictly column-closed family $\tabFamT$ is \defin{linear} if
\[
\stat(m_1 C_1,\dots, m_lC_l) = S_1m_1 + S_2m_2 + \dots + S_lm_l
\]
for all choices of $m_i \geq 0$.
Note that this is equivalent with the statement that $\stat(T_1 \sim T_2) = \stat(T_1)+ \stat(T_2)$
for every pair $T_1$, $T_2$ of fillings such that $T_1 \sim T_2$ is in $\tabFamT$.
\end{definition}

Note that the statistic given by $w(T)=(w_1,w_2,\dots,w_n)$ where $w_i$ are the number of boxes filled with $i$ in $T$
is a linear statistic. This is usually called the \defin{weight} of $T$.
Finally, two statistics $\stat_1 : \tabFamT \to \setN^{s_1} $ and $\stat_2 : \tabFamT \to \setN^{s_2}$
can be combined into a new statistic $\sigma$ in the obvious manner as $\stat(T) = (\stat_1(T),\stat_2(T))$,
which map to $\setN^{s_1+s_2}$.

\section{Properties of linear recurrences}\label{sec:linearRecurrences}

We first recall some basic notions about linear recurrences.
This can be seen as analogous to the theory of linear differential equations.
\medskip

A sequence $\{a_k(\xvec)\}_{k=0}^\infty$ of functions are said to satisfy a \defin{linear recurrence} of length $r$
if there are functions $c_1(\xvec),\dotsc,c_r(\xvec)$ such that
\begin{equation}\label{eq:linearsequence}
a_{k}(\xvec) +  c_1(\xvec)a_{k-1}(\xvec) + \dotsb + c_r(\xvec)a_{k-r}(\xvec) \equiv 0
\end{equation}
for all integers $k\geq r$. The polynomial (in $t$)
\[
\chi(t) = t^{k} +  c_1(\xvec)t^{k-1} + \dotsb + c_{r-1}(\xvec)t + c_r(\xvec)
\]
is called the \defin{characteristic polynomial} of the recursion.
If the characteristic polynomial factors as $(t-\rho_1)^{m_1}\dotsb (t-\rho_r)^{m_r}$,
where all $\rho_i(\xvec)$ are \emph{different}, then one can express $a_k(\xvec)$ as
\begin{equation}\label{eq:generalForm}
a_k(\xvec) = \sum_{l=1}^r \left(\rho_l(\xvec)\right)^k  \sum_{j=0}^{m_l-1} g_{lj}(\xvec)k^j
\end{equation}
for some functions $g_{li}(\xvec)$, that only depend on the \emph{initial conditions}, that is, the functions $a_0(\xvec)$ to $a_{r-1}(\xvec)$.
In the other direction, any sequence of functions which are of the form given in \cref{eq:generalForm}
satisfy a linear recurrence with $\chi(t)$ as characteristic polynomial.
Notice that the $c_i$ are elementary symmetric polynomials in the $\rho_i$, with some signs.

\medskip 

From now on, we are only concerned about sequences where the $a_k(\xvec)$ and $\rho_j(\xvec)$ are polynomials,
which implies that the $c_i(\xvec)$ are polynomials and the $g_{li}(\xvec)$ are rational functions.
Let $a_k(\xvec)$ and $b_k(\xvec)$ be sequences of polynomials with characteristic polynomials 
given by $\prod_i (t-\rho_i(\xvec))^{p_i}$ and $\prod_i (t-\rho_i(\xvec))^{q_i}$ respectively,
where some of the $p_i$ or $q_i$ may be zero.
Then, as sequences for $k=0,1,\dotsc$,
\begin{itemize}
\item $h(\xvec) a_k(\xvec)$ satisfy the same linear recurrence as $a_k(\xvec)$, where $h(\xvec)$ is any polynomial,
\item $a_k(\xvec) + b_k(\xvec)$ satisfy a linear recurrence with characteristic polynomial given by
\[
 \prod_i (t-\rho_i(\xvec))^{\max(p_i,q_i)}
\]
\item $a_k(\xvec) \cdot b_k(\xvec)$ satisfy a linear recurrence with characteristic polynomial given by
\[
\prod_{\substack{i,j \\ p_i \geq 1\;\; q_j \geq 1}} (t-\rho_i(\xvec)\rho_j(\xvec))^{p_i+q_j-1}.
\]
However, if $\rho_{i_1}(\xvec)\rho_{j_1}(\xvec) = \rho_{i_2}(\xvec)\rho_{j_2}(\xvec)$ for some $(i_1,j_1)\neq(i_2,j_2)$,
some roots of the characteristic equation can be removed --- details are left as an exercise.
As an example:
\[
 a_k(\xvec) = (1+k^3) (5x)^k, \qquad b_k(\xvec) = (2+k^2-k^4) (2x-1)^k
\]
satisfy linear recurrences with characteristic polynomials $(t-5x)^4$ and $(t-(2x-1))^5$ respectively.
The product, $a_k(\xvec)b_k(\xvec) = (1+k^3)(2+k^2-k^4)(10x^2-5x)^k$
satisfy a linear recurrence with characteristic polynomial $(t-(10x^2-5x))^8$.

\item $a_{s k}(\xvec)$ with $s$ a fixed positive integer satisfy a linear recurrence with characteristic polynomial given by
\[
\prod_{i} (t-\rho_i(\xvec)^{s} )^{p_i}.
\]
\end{itemize}
The proofs for these statements follows from writing $a_k(\xvec)$ and $b_k(\xvec)$ in the form \cref{eq:generalForm}
and examining the expressions above.
Note that if $a_k(\xvec)$ and $b_k(\xvec)$ have characteristic polynomials with simple roots,
then so does $h(\xvec)\cdot a_k(\xvec)$, $a_k(\xvec)+b_k(\xvec)$, $a_k(\xvec)\cdot b_k(\xvec)$ and $a_{sk}(\xvec)$.

Finally, the definition of a sequence satisfying a linear recurrence in \cref{eq:linearsequence}
does not provide an easy method to check for a linear recurrence if the $c_i$ are unknown.
A useful shortcut might then be the following observation:
a sequence $\{a_k(\xvec)\}_{k=0}^\infty$ satisfy a linear recurrence of length $r$ if and only if
the following $r\times r$-determinant vanish for all $k\geq r-1$:
\[
\begin{vmatrix}
a_{k} &  a_{k-1} & \dots & a_{k-r+1} \\
a_{k+1} &  a_{k} & \dots & a_{k-r+2} \\
\vdots &  \vdots & \ddots & \vdots  \\
a_{k+r-1} &  a_{k+r-2} & \dots & a_{k}
\end{vmatrix}.
\]
This classical trick can be found in \emph{e.g.} \cite{Lyness1957}.

\subsection{Tableaux and linear recurrences}

\begin{lemma}\label{lem:onetypeColumnRecurrence}
Let $\tabFamT$ be a weakly column-closed family of fillings and
$T=(C_{1},\dots,C_{l})$ is a fixed filling in $\tabFamT$, where no adjacent columns are equal.
Let $\stat : \tabFamT \to \setN^n$ be a linear combinatorial statistic such that
\[
\stat(a_1 C_1,\dots, a_lC_l)  = a_1S_1 + a_2S_2 + \dots + a_lS_l.
\]
Define the sequence of polynomials
\[
F_k(\zvec) = \sum_{\substack{ a_{i} \geq 1 \\  a_{1}+a_{2}+\dots + a_{l} = k }}
\zvec^{\sigma( a_{1}C_{1},\dots, a_{l}C_{l} )} \text{ and } F_{0}(\zvec)=(-1)^{l+1}.
\]
Then $\{ F_{k}(\zvec) \}_{k=0}^\infty$ satisfy a linear recurrence, with characteristic polynomial
\begin{equation}\label{eq:charEqCC}
(t-\zvec^{S_1})(t-\zvec^{S_2})\dotsm(t-\zvec^{S_l}).
\end{equation}
\end{lemma}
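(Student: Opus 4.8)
The plan is to reduce the statement to the standard dictionary between rational generating functions and linear recurrences, applied to the ordinary generating function $G(t):=\sum_{k\ge0}F_k(\zvec)\,t^k$. First note that weak column-closedness of $\tabFamT$, applied to $T=(C_1,\dots,C_l)$, guarantees that every filling $(a_1C_1,\dots,a_lC_l)$ with $a_i\ge1$ again lies in $\tabFamT$, so each summand in the definition of $F_k(\zvec)$ is defined; the remainder of the argument uses only the linearity of $\stat$.

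Abbreviate $y_i:=\zvec^{S_i}$. By linearity, $\zvec^{\stat(a_1C_1,\dots,a_lC_l)}=\zvec^{a_1S_1+\dots+a_lS_l}=y_1^{a_1}\cdots y_l^{a_l}$, so for $k\ge1$ the polynomial $F_k(\zvec)$ is the sum of $y_1^{a_1}\cdots y_l^{a_l}$ over all compositions $(a_1,\dots,a_l)$ of $k$ into $l$ strictly positive parts. Summing over $k\ge1$ and factoring the constraints $a_i\ge1$ coordinatewise collapses this into a product of geometric series, $\sum_{k\ge1}F_k(\zvec)t^k=\prod_{i=1}^{l}\frac{y_i t}{1-y_i t}$. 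Adding the term $F_0(\zvec)=(-1)^{l+1}$ and clearing denominators over $D(t):=\prod_{i=1}^{l}(1-y_i t)$ yields $G(t)=P(t)/D(t)$ with numerator
\[
P(t)=(-1)^{l+1}D(t)+\Bigl(\prod_{i=1}^{l}y_i\Bigr)t^l .
\]

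Next I would invoke the standard correspondence: a sequence obeys a linear recurrence with characteristic polynomial $\chi(t)=\prod_{i=1}^l(t-y_i)$ exactly when its generating function has the form $P(t)/D(t)$, where $D(t)=t^l\chi(1/t)=\prod_i(1-y_it)$ is the reciprocal polynomial and $\deg P\le l-1$; here the coefficients $c_i$ of \cref{eq:linearsequence} are the signed elementary symmetric functions $(-1)^ie_i(y_1,\dots,y_l)$. Thus the entire lemma reduces to verifying that the numerator $P(t)$ above has degree at most $l-1$.

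This degree check is the only delicate point, and it is precisely what dictates the otherwise mysterious initial value $F_0(\zvec)=(-1)^{l+1}$. Writing $D(t)=\sum_{j=0}^{l}(-1)^je_j(y)\,t^j$, the coefficient of $t^l$ in $D(t)$ is $(-1)^le_l(y)=(-1)^l\prod_i y_i$, so the coefficient of $t^l$ in $P(t)$ is $(-1)^{l+1}(-1)^l\prod_iy_i+\prod_iy_i=-\prod_iy_i+\prod_iy_i=0$. Hence $\deg P\le l-1$, $G(t)$ has the required form, and $\{F_k(\zvec)\}_{k\ge0}$ satisfies a linear recurrence with characteristic polynomial \eqref{eq:charEqCC}. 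As an alternative to the generating-function bookkeeping, one can substitute $a_i=b_i+1$ to obtain $F_k(\zvec)=\bigl(\prod_i y_i\bigr)h_{k-l}(y_1,\dots,y_l)$ for $k\ge1$, where $h_m$ denotes the complete homogeneous symmetric polynomial, and then appeal to the classical recurrence $\sum_{j=0}^{l}(-1)^je_j(y)h_{m-j}=0$; the role of $F_0(\zvec)$ is again to absorb the boundary term at $k=l$.
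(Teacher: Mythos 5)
Your proof is correct, but it takes a genuinely different route from the paper's. The paper argues combinatorially: every filling $(a_1C_1,\dots,a_lC_l)$ with $a_1+\dots+a_l>l$ arises by duplicating some column of a shorter such filling, and an inclusion--exclusion count of the multiple ways of doing so produces the recurrence with the elementary symmetric polynomials $e_j(\zvec^{S_1},\dots,\zvec^{S_l})$ as coefficients; the value $F_0(\zvec)=(-1)^{l+1}$ is then pinned down after the fact from the initial conditions $F_1=\dots=F_{l-1}=0$, $F_l=\zvec^{S_1+\dots+S_l}$. You instead convert everything to algebra at the outset: linearity of $\stat$ turns $F_k$ into the sum of monomials $y_1^{a_1}\cdots y_l^{a_l}$ over compositions of $k$ with positive parts, the ordinary generating function collapses to $\prod_i y_it/(1-y_it)$, and the lemma reduces to the degree bound $\deg P\le l-1$ on the numerator, which is exactly what forces $F_0=(-1)^{l+1}$. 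Your computation of the $t^l$-coefficient of $P(t)$ is right, and your closing remark that $F_k=(\prod_i y_i)\,h_{k-l}(y)$ together with $\sum_{j=0}^l(-1)^je_jh_{m-j}=0$ is an equivalent and equally valid packaging (the boundary case $k=l$, where the identity gives $h_0=1$ rather than $0$, is indeed cancelled by the $F_0$ term, as you note). What your approach buys is rigor and transparency: the paper leaves the inclusion--exclusion step as ``straightforward to show'' and the determination of $F_0$ as ``easy to see,'' whereas your degree check makes both completely explicit. What the paper's approach buys is a reusable combinatorial template: the same duplicate-and-overcount argument reappears essentially verbatim in the proofs of \cref{prop:mainProp} and \cref{prop:recurrenceFromIDP}, where no clean product formula for the generating function is available.
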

\begin{proof}
Note that the definition of $F_k(\zvec)$ implies that $F_k(\zvec)\equiv 0$ whenever $1 \leq k < l$,
and that $F_l(\xvec) = \zvec^{S_1+\dots+S_l}$. These are $l$ conditions and it is easy to see that if we
have a characteristic polynomial of the form \eqref{eq:charEqCC}, then $F_{0}(\zvec)$ must be equal to $(-1)^{l+1}$.

Any tableau of the form $(a_{1}C_{1},\dots, a_{l}C_{l})$ where $a_i \geq 1$ and $a_{1}+a_{2}+\dots + a_{l} > l$,
must have some $a_i \geq 2$. Thus, this tableau can be constructed from some $(a_{1}C_{1},\dots,(a_{i}-1)C_i,\dots, a_{l}C_{l})$ by duplicating column $C_i$.
However, there might be several ways to do this. By using an inclusion-exclusion argument, it is straightforward to show that
\[
F_{k+l}(\zvec) - (\zvec^{S_1}+\dotsb+\zvec^{S_l}) F_{k+l-1}(\zvec) + \dotsb
+(-1)^l(\zvec^{S_1}\dotsm \zvec^{S_l})F_{k}(\zvec) \equiv 0
\]
for all $k\geq 0$. Note that the coefficients are the elementary symmetric polynomials, evaluated at $\zvec^{S_1},\dots,\zvec^{S_l}$,
so factoring the characteristic polynomial gives exactly the expression in \eqref{eq:charEqCC}.
\end{proof}

\begin{lemma}\label{lem:columnsRecurrence}
Let $\tabFamT$ be a weakly column-closed family of fillings, and let the $T(\avec) \in \tabFamT$ be given as
\begin{align*}
T(\avec) &= T_1 \sim (a_{11}C_{11},\dots,a_{1l_1}C_{1l_1}) \sim T_2 \sim  (a_{21}C_{21},\dots,a_{2l_2}C_{2l_2})\sim \dots \\
	&\sim T_m \sim (a_{m1}C_{m1},\dots,a_{m,l_m}C_{m,l_m}) \sim T_{m+1} 
\end{align*}
where each $T_i$ is some fixed (possibly empty) filling and no adjacent columns in each $(C_{i1},\dots,C_{il_i})$ are equal.
Furthermore, let $\stat : \tabFamT \to \setN^n$ be an affine combinatorial statistic, such that
\[
\stat(T(\avec)) = A + a_{11}S_{11} + \dots + a_{m, l_m} S_{m, l_m}.
\]
Let $\alpha=(\alpha_1,\dots,\alpha_m)$ be a fixed integer composition and define the polynomial
\begin{align*}
G_\alpha(\zvec) = \sum_{\substack{ a_{ij} \geq 1 \\  a_{i1}+a_{i2}+\dots + a_{il_i} = \alpha_i }}
\zvec^{\sigma( T(\avec)   )} \text{ and } G_{0}(\zvec) =-(-1)^{l_1 + l_2 + \dots + l_m}\zvec^A
\end{align*}
where the sum is over all $a_{ij}\geq 1$, $1\leq i \leq m$, $1\leq j \leq \alpha_i$ such that $a_{i1}+\dotsb + a_{il_i} = \alpha_i$.
Then
\[
G_{k\alpha}(\zvec) = \zvec^A \prod_{i=1}^m F^{i}_{k \alpha_i}(\zvec)
\text{ where }
F^i_k(\zvec) = \!\!\!\! \sum_{\substack{ c_{i} \geq 1 \\  c_{1}+\dotsb + c_{l_i} = k }} \!\!\!\!
\zvec^{ c_{1}S_{i1}+\dots + c_{l_i}S_{il_i} }, \quad F_{0}(\zvec)=-(-1)^{l_i}.
\]
\end{lemma}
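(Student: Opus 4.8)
The plan is to make everything rest on two inputs: the affineness of $\stat$, which turns the weight of a filling into a monomial whose exponent is linear in the multiplicities, and the fact that the summation constraints defining $G_{k\alpha}$ decouple across the $m$ blocks. Once these are in place the factorization is essentially formal, and the linear-recurrence payoff comes from \cref{lem:onetypeColumnRecurrence} together with the product and substitution rules for recurrences recorded in \cref{sec:linearRecurrences}.

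First I would rewrite the summand. Since $\stat$ is affine, $\stat(T(\avec)) = A + \sum_{i=1}^m\sum_{j=1}^{l_i} a_{ij}S_{ij}$, so the monomial splits as
\[
\zvec^{\stat(T(\avec))} = \zvec^A\prod_{i=1}^m\prod_{j=1}^{l_i}\bigl(\zvec^{S_{ij}}\bigr)^{a_{ij}}.
\]
The crucial observation is that the constraints on $\avec$ in the defining sum for $G_{k\alpha}(\zvec)$ separate block by block: the indices $a_{i1},\dots,a_{il_i}$ are coupled only through $a_{i1}+\dots+a_{il_i}=k\alpha_i$, and nothing links indices belonging to different blocks. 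Hence the sum over $\avec$ is a product of $m$ independent sums and $\zvec^A$ pulls out, giving
\[
G_{k\alpha}(\zvec) = \zvec^A\prod_{i=1}^m\Bigg(\sum_{\substack{a_{ij}\geq 1\\ a_{i1}+\dots+a_{il_i}=k\alpha_i}}\zvec^{a_{i1}S_{i1}+\dots+a_{il_i}S_{il_i}}\Bigg),
\]
and recognizing the $i$th factor as $F^i_{k\alpha_i}(\zvec)$ yields the claimed identity for every $k\geq 1$. Here I would pause to note the quiet role of the two hypotheses. Weak column-closedness guarantees that membership of $T(\avec)$ in $\tabFamT$ is independent of the particular positive multiplicities $a_{ij}$, since any two such fillings differ by column duplication; thus the index set is genuinely all positive solutions of the block equations (or is empty, in which case the identity is trivially $0=0$). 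The assumption that no two adjacent columns inside a block coincide ensures, exactly as in \cref{lem:onetypeColumnRecurrence}, that distinct tuples $\avec$ give distinct fillings, so the block sums are counted without repetition and really are the $F^i$.

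The only genuinely delicate point, and the one I expect to be the main obstacle, is the degenerate index $k=0$: there the combinatorial sum defining $G_{k\alpha}$ is empty, so the identity cannot be read off from the factorization and must instead be matched by hand against the chosen boundary values $G_0(\zvec)$ and $F^i_0(\zvec)$, tracking the sign conventions inherited from \cref{lem:onetypeColumnRecurrence}. It is precisely these conventions that make each sequence $\{F^i_{k\alpha_i}(\zvec)\}_k$ satisfy a linear recurrence, which is the ultimate purpose of the factorization: \cref{lem:onetypeColumnRecurrence} applied to block $i$ produces a recurrence for $\{F^i_k\}_k$ with characteristic polynomial $\prod_{j=1}^{l_i}(t-\zvec^{S_{ij}})$, the $k\mapsto k\alpha_i$ substitution rule converts it into a recurrence for $\{F^i_{k\alpha_i}\}_k$, and finally the scalar-multiple and product rules, applied through the identity $G_{k\alpha}=\zvec^A\prod_i F^i_{k\alpha_i}$, deliver a linear recurrence for $\{G_{k\alpha}(\zvec)\}_k$.
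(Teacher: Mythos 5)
Your proposal is correct and takes essentially the same route as the paper, whose entire proof is the remark that the identity is ``immediate from the definition of $\sigma$ and the $F^i_k(\zvec)$'' by substitution --- your block-by-block decoupling of the constrained sum over the $a_{ij}$ is exactly that substitution carried out explicitly, and the extra material on deducing the recurrence belongs to \cref{cor:linearRecurrence} rather than to this lemma. Your instinct to verify the degenerate index separately is sound; if you do carry it out you will find that the stated conventions $G_0(\zvec)=-(-1)^{l_1+\cdots+l_m}\zvec^A$ and $F^i_0(\zvec)=-(-1)^{l_i}$ make the two sides agree only when $m$ is odd, a sign infelicity in the paper's boundary values rather than a flaw in your argument.
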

\begin{proof}
This is immediate from the the definition of $\sigma$ and the $F^i_k(\zvec)$,
by simply substituting the definition of $F^i_k(\zvec)$ in the product and recognizing the expression for $\sigma$.
\end{proof}

Note that the integer composition $\alpha$ should not be confused with some shape of a tableau.
The composition $\alpha$ rather serves as the number of columns there are in each of the $m$ ``blocks'' of columns in $T(\avec)$.
The functions $G_{k\alpha}(\xvec)$ can now be seen as the generating functions of $\sigma$, as the block sizes grows linearly with $k$,
and each block $i$ consists of column fillings with columns from $\{ C_{i1},\dots, C_{il_i}\}$, each present at least once.
\begin{figure}[!ht]
\centering
\begin{tikzpicture}[scale=0.35,y=-1cm]

\draw[black] (0, 0)--(0,6)--(6,6)--(6,0)--cycle;
\draw[black,dashed] (2, 0)--(2,6);
\draw[black,dashed] (4, 0)--(4,6);
\draw[black] (6, 0)--(6,4)--(10,4)--(10,0)--cycle;
\draw[black,dashed] (8, 0)--(8,4);
\draw[black] (10, 0)--(10,1)--(18,1)--(18,0)--cycle;
\draw[black,dashed] (12, 0)--(12,1);
\draw[black,dashed] (14, 0)--(14,1);
\draw[black,dashed] (16, 0)--(16,1);
\draw[<->] (0 , -0.5)--(5.9, -0.5);
\draw[<->] (6.1 , -0.5)--(9.9, -0.5);
\draw[<->] (10.1 , -0.5)--(18, -0.5);
\node at ( 3 , -1.5) {$\alpha_1$};
\node at ( 8 , -1.5) {$\alpha_2$};
\node at ( 14 , -1.5) {$\alpha_3$};
\end{tikzpicture}
\caption{The role of $\alpha$. Here, all $T_i$ are empty.}
\label{FigKLambda}
\end{figure}
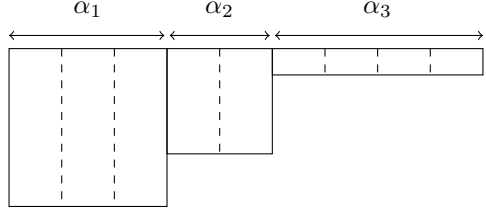
However, note that if all $T_i$ are empty fillings (no columns), then $G_{k\alpha}(\xvec)$
can be seen as generating function for fillings of shape $kD$ for some \emph{fixed} diagram $D$ as in \cref{FigKLambda}.
In the proof of \cref{prop:mainProp}, the relation between $\alpha$ and $D$ is explained in more detail.

\begin{corollary}\label{cor:linearRecurrence}
$\{ G_{k \alpha}(\zvec) \}_{k=0}^\infty$ satisfy a linear recurrence with characteristic polynomial given by
\begin{equation}\label{eq:mainCharPoly}
 \prod_{\substack{ 1\leq j_1 \leq l_1 \\ 1\leq j_2 \leq l_2  \\ \vdots \\ 1\leq j_m \leq l_m \\ }}
\left( t - \zvec^{\alpha_1 S_{1j_1}}   \zvec^{\alpha_2 S_{2j_2}} \dotsm \zvec^{\alpha_m S_{m,j_m}}  \right).
\end{equation}
Furthermore, if $\sigma$ is linear, then \cref{eq:mainCharPoly} can be expressed as
\begin{equation}\label{eq:mainCharPolyLinear}
 \prod_{\substack{ 1\leq j_1 \leq l_1 \\ 1\leq j_2 \leq l_2  \\ \vdots \\ 1\leq j_m \leq l_m \\ }}
\left( t - \zvec^{\sigma( \alpha_1 C_{1j_1}, \alpha_2 C_{2j_2}, \dotsc, \alpha_m C_{m,j_m})}  \right).
\end{equation}
Multiple roots can be disregarded if $\{S_{i1},\dots,S_{il_i}\}$ are all distinct for every $i$.
\end{corollary}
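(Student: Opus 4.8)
The plan is to read off the recurrence for $G_{k\alpha}(\zvec)$ from the product factorization supplied by \cref{lem:columnsRecurrence}, combined with the closure properties of linearly recurrent sequences collected in \cref{sec:linearRecurrences}. By \cref{lem:columnsRecurrence} we have $G_{k\alpha}(\zvec) = \zvec^A \prod_{i=1}^m F^i_{k\alpha_i}(\zvec)$, so it suffices to produce a linear recurrence for the product $\prod_{i=1}^m F^i_{k\alpha_i}(\zvec)$ and then multiply through by the fixed polynomial $\zvec^A$, which by the rule for multiplication by a fixed polynomial leaves the recurrence, and hence its characteristic polynomial, unchanged.

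First I would treat a single factor. Each $F^i_k(\zvec)$ is precisely the generating function of \cref{lem:onetypeColumnRecurrence} for the block of columns $C_{i1},\dots,C_{il_i}$ with associated vectors $S_{i1},\dots,S_{il_i}$, so $\{F^i_k(\zvec)\}_k$ satisfies a linear recurrence with characteristic polynomial $\prod_{j=1}^{l_i}(t-\zvec^{S_{ij}})$ as in \cref{eq:charEqCC}. Passing from $F^i_k$ to the subsequence $F^i_{k\alpha_i}$ is the operation $a_k\mapsto a_{sk}$ with $s=\alpha_i$, which raises every root to the power $\alpha_i$; thus $\{F^i_{k\alpha_i}(\zvec)\}_k$ obeys a linear recurrence whose characteristic polynomial is $\prod_{j=1}^{l_i}\bigl(t-\zvec^{\alpha_i S_{ij}}\bigr)$.

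Next I would combine the $m$ factors by iterating the product rule for linearly recurrent sequences. Applied to the two characteristic polynomials $\prod_{j_1}(t-\zvec^{\alpha_1 S_{1j_1}})$ and $\prod_{j_2}(t-\zvec^{\alpha_2 S_{2j_2}})$, the rule produces a characteristic polynomial whose roots are the pairwise products $\zvec^{\alpha_1 S_{1j_1}}\zvec^{\alpha_2 S_{2j_2}}$; inducting on the number of blocks gives a characteristic polynomial whose roots are the products $\zvec^{\alpha_1 S_{1j_1}}\cdots\zvec^{\alpha_m S_{m,j_m}}$ ranging over all tuples $(j_1,\dots,j_m)$, which are exactly the roots listed in \cref{eq:mainCharPoly}. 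The cleanest way to keep the multiplicity bookkeeping honest is to write each $F^i_{k\alpha_i}(\zvec)$ in the form \cref{eq:generalForm}, multiply the expansions, and collect terms: the product is again of the form \cref{eq:generalForm} with bases $\zvec^{\alpha_1 S_{1j_1}}\cdots\zvec^{\alpha_m S_{m,j_m}}$, so the converse statement following \cref{eq:generalForm} delivers the recurrence. The reformulation \cref{eq:mainCharPolyLinear} for linear $\sigma$ is then immediate: linearity forces $A=0$ and $\sigma(T_1\sim T_2)=\sigma(T_1)+\sigma(T_2)$, whence $\zvec^{\alpha_1 S_{1j_1}}\cdots\zvec^{\alpha_m S_{m,j_m}}=\zvec^{\sigma(\alpha_1 C_{1j_1},\dots,\alpha_m C_{m,j_m})}$.

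The point requiring real care, and the one I expect to be the genuine obstacle, is the treatment of repeated roots: justifying both that \cref{eq:mainCharPoly} is a valid, though possibly non-minimal, characteristic polynomial when several $S_{ij}$ coincide, and the final claim that multiplicities may be dropped when each set $\{S_{i1},\dots,S_{il_i}\}$ consists of distinct vectors. For the former, one checks that the multiplicity with which a product root appears in \cref{eq:mainCharPoly} is at least one more than the degree of its polynomial coefficient in \cref{eq:generalForm}; this reduces to the elementary inequality $\prod_i e_i \geq 1+\sum_i(e_i-1)$ for positive integers $e_i$, so the minimal characteristic polynomial divides \cref{eq:mainCharPoly}. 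For the latter, if the $S_{ij}$ are distinct within each block then every $F^i_{k\alpha_i}(\zvec)$ has simple roots and no polynomial-in-$k$ factors; multiplying such expansions and merging coincident product roots only combines constant coefficients, so no genuine higher multiplicity is created and the squarefree part of \cref{eq:mainCharPoly} already serves as a characteristic polynomial.
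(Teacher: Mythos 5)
Your proof is correct and follows essentially the same route as the paper: the paper's own argument just cites \cref{lem:onetypeColumnRecurrence} for each factor $F^i_k$ and then appeals to ``the theory of linear recurrences'' of \cref{sec:linearRecurrences} for the subsequence, product and scalar-multiple rules, which is exactly the chain you spell out (your multiplicity bookkeeping is more careful than anything the paper records). The only quibble is your aside that linearity forces $A=0$ --- when the fixed fillings $T_i$ are nonempty one gets $A=\sum_i \sigma(T_i)$, which need not vanish --- but this is harmless, since the identification $\zvec^{\alpha_1 S_{1j_1}}\dotsm\zvec^{\alpha_m S_{m,j_m}}=\zvec^{\sigma(\alpha_1 C_{1j_1},\dotsc,\alpha_m C_{m,j_m})}$ already follows from additivity of $\sigma$ over concatenation applied to the block tableau alone.
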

\begin{proof}
Each $F^i_k(\zvec)$ can be seen as generated by a linear statistic $\sigma'(T)=\sigma(T)-A$.
Therefore, each of these satisfy a linear recurrence, according to \cref{lem:onetypeColumnRecurrence}.
The theory of linear recurrences now imply that the $G_{k \alpha}(\zvec)$ also does,
with a characteristic polynomial as described above, since $G_\alpha$ is essentially a product of the $F^i$.

\cref{eq:mainCharPolyLinear} follows from linearity of $\sigma$ together with the definition of $\sigma$.
Note that the value of $\sigma( \alpha_1 C_{1j_1}, \alpha_2 C_{2j_2}, \dotsc, \alpha_m C_{m,j_m})$ is defined by linearity $\sigma$,
but that the tableau we evaluate on might not be in $\tabFamT$ (if some $\alpha_i=0$) unless this family is strictly column closed.
The statement about simple roots follows immediately from the theory of linear recurrences.
\end{proof}

\medskip

So far, we have only treated generating functions of subsets of tableaux where the columns
are from a specified subset and each column appear at least once. We will now treat the case where only the
family of fillings and the diagram shape defines the generating function.
To do that, it is natural to restrict ourself to a special type of families of fillings.

A family $\tabFamT$ is said to be \defin{well-behaved} if every filling $T \in \tabFamT$
satisfies the following two properties:
\begin{itemize}
\item if two columns in $T$ are identical, then all columns in between are also identical to these two.
\item if two columns $C_1$ and $C_2$ are different and $C_1$ appears to somewhere the left $C_2$, then
$C_1$ never appears to the right of $C_2$ in some other filling $T' \in \tabFamT$.
\end{itemize}
For example, fillings such that every row is weakly decreasing (or increasing) are well-behaved.

\begin{proposition}\label{prop:mainProp}
Let $\tabFamT$ be well-behaved  weakly column-closed family of fillings and let $\sigma$ be an affine statistic defined on $\tabFamT$.
Let $D$ be a fixed diagram and define the polynomials $H_{D}(\zvec)$ as
\[
H_{D}(\zvec) = \sum_{ T \in \tabFamT(D,n)} \zvec^{\stat(T)}
\]
where $\tabFamT(D,n)$ is the set of all fillings in $\tabFamT$ with shape $D$ and for every box $(i,j)$ in such a filling,
we have $1\leq T(i,j)\leq n$. Then $\{ H_{kD}(\zvec) \}_{k=1}^\infty$ satisfy a linear recurrence.
Furthermore, if $\sigma$ is linear, then the characteristic polynomial of the recurrence is given by
\begin{equation}\label{eq:mainCharPolyLinear2}
\prod_{ T } \left( t-\zvec^{\stat(T)} \right)
\end{equation}
where $T$ runs over all tableaux of shape $D$ such that any adjacent columns of same shape in $T$ are identical,
and each $T$ can be obtained from some $\tabFamT(kD,n)$ by deleting some columns.
\emph{Note that $T$ might \emph{not} itself be an element in $\tabFamT$.}
However, if $\tabFamT$ is strictly column closed, then each such $T$ is in $\tabFamT(D,n)$.
\end{proposition}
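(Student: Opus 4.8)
The plan is to reduce the generating function $H_{kD}(\zvec)$ to a sum of the block-structured polynomials $G_{k\alpha}(\zvec)$ from \cref{cor:linearRecurrence}, and then invoke the closure properties of linear recurrences from \cref{sec:linearRecurrences}. The central idea is that the \emph{well-behaved} hypothesis forces every filling $T \in \tabFamT(D,n)$ to have a rigid global column structure: by the second well-behavedness condition, there is a well-defined left-to-right ordering on the distinct columns that can appear, and by the first condition, equal columns must occur in contiguous runs. Consequently each filling factors canonically as a concatenation of \emph{blocks}, where block $i$ is a maximal run built from a fixed ordered list of column shapes $C_{i1},\dots,C_{il_i}$ with no two adjacent columns equal, each repeated at least once. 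This is exactly the situation described by $T(\avec)$ in \cref{lem:columnsRecurrence}, with the fixed fillings $T_i$ being empty.

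First I would set up the combinatorial bookkeeping: I would show that for a fixed shape $D$, the set $\tabFamT(D,n)$ partitions into finitely many \emph{types}, where a type records the ordered multiset of distinct column fillings (the $C_{ij}$ and the block structure) but not the multiplicities $a_{ij}$. Because $D$ has a fixed number of columns with fixed shapes, and entries are bounded by $n$, there are only finitely many types, and each type corresponds to a choice of the $C_{ij}$. Next I would observe that replacing $D$ by $kD$ multiplies each column's multiplicity by $k$, so the shape $kD$ forces the block sizes to scale from $\alpha_i$ to $k\alpha_i$; here $\alpha_i$ is the number of columns of $D$ in block $i$ (using $kD_{\alpha/\beta} = D_{k\alpha/k\beta}$ from \cref{sec:linearRecurrences}). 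For each type, summing $\zvec^{\sigma(T)}$ over all fillings of shape $kD$ of that type is precisely $G_{k\alpha}(\zvec)$ for the corresponding $\alpha$ and columns, so
\[
H_{kD}(\zvec) = \sum_{\text{types}} G_{k\alpha}(\zvec).
\]
By \cref{cor:linearRecurrence} each summand satisfies a linear recurrence, and by the additivity of linear recurrences recalled in \cref{sec:linearRecurrences} (the characteristic polynomial of a sum being $\prod_i (t-\rho_i)^{\max(p_i,q_i)}$), the finite sum $H_{kD}(\zvec)$ does as well. This establishes the recurrence in general, for affine $\sigma$.

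For the linear case and the explicit characteristic polynomial \cref{eq:mainCharPolyLinear2}, I would combine the per-type characteristic polynomials from \cref{eq:mainCharPolyLinear}. Each factor there has the form $t - \zvec^{\sigma(\alpha_1 C_{1j_1},\dots,\alpha_m C_{m,j_m})}$, and by linearity of $\sigma$ the exponent equals $\sigma(T)$ for the filling $T$ obtained by taking each chosen column $C_{ij_i}$ with its full multiplicity $\alpha_i$ in block $i$. Such a $T$ is exactly a shape-$D$ filling in which adjacent columns of the same shape are identical, and it is obtainable from a member of $\tabFamT(kD,n)$ by deleting columns --- matching the description of the index set in \cref{eq:mainCharPolyLinear2}. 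I expect the main obstacle to lie in verifying that the well-behaved conditions genuinely guarantee a \emph{unique} canonical block decomposition, so that no filling is double-counted across types and the exponents $\sigma(T)$ in the combined product are exactly the claimed ones; the second bullet of the definition (global consistency of column order) is what rules out two distinct types producing the same filling, and I would argue this carefully. The final remark --- that $T$ need not lie in $\tabFamT$ unless the family is strictly column-closed --- follows since deleting a column to realize $T$ from $\tabFamT(kD,n)$ requires closure under deletion of arbitrary columns, precisely the strict condition of \cref{def:column-closed}.
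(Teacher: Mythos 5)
Your proposal is correct and follows essentially the same route as the paper: decompose $D$ into maximal blocks of equal column shapes, use well-behavedness to get a unique finite "type" decomposition of each filling of $kD$, write $H_{kD}$ as a finite sum of the $G_{k\alpha}$ from \cref{lem:columnsRecurrence}, and conclude via \cref{cor:linearRecurrence} and closure of linear recurrences under addition; the linear case is likewise handled by rewriting the roots of \cref{eq:mainCharPolyLinear} as $\zvec^{\sigma(T)}$. The uniqueness concern you flag is exactly the point the paper also relies on (asserting each filling "can be obtained in a unique way" from its list of distinct columns), so no essential difference.
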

\begin{proof}
Note that every column in $kD$ has the same shape as some column in $D$.
Since we may only fill boxes with entries from $[n]$, there is a finite number columns that
can appear in $\tabFamT(kD,n)$.
Furthermore, since $\tabFamT$ is well-behaved, there is a \emph{finite} number of lists of columns,
$(C_1,C_2,\dots,C_l)$, such that all $C_i$ are different and $C_i$ never appears to the right of $C_j$ in
some filling in  $\tabFamT$, whenever $i<j$.
Thus, for every $k$, every filling in $\tabFamT(kD,n)$ can be obtained in a unique way from such a list,
by duplicating some columns in that list.
Hence, $H_{kD}(\zvec)$ can be expressed as a finite sum over such lists $(C_1,C_2,\dots,C_l)$,
where each term corresponds to fillings $T$ of shape $kD$ where each column in $T$ is in the list
and every column in the list appears at least once in $T$.

More specifically, let the diagram $D$ be expressed as the concatenation
$ D = (\alpha_1 s_1 , \alpha_2 s_2, \dotsc, \alpha_m s_m) $
where the $s_i$ are column shapes and $s_i \neq s_{i+1}$ (here, we use the same notation as for filled columns).
Then every filling in $\tabFamT(kD,n)$ can be obtained in a unique way as
\[
(a_{11}C_{11},\dots,a_{1l_1}C_{1l_1}) \sim  (a_{21}C_{21},\dots,a_{2l_2}C_{2l_2})\sim \dots \sim  (a_{m1}C_{m1},\dots,a_{m,l_m}C_{m,l_m})
\]
where each $C_{ij}$ has shape $s_i$ and $a_{i1}+\dots+a_{i,l_i}=\alpha_i$.
Hence, $H_{kD}(\xvec,\tvec)$ can be expressed as a sum over polynomials of the same form as $G_{k\alpha}$
in \cref{lem:columnsRecurrence}.
\cref{cor:linearRecurrence} tells us that each $G_{k\alpha}$ satisfy a linear recurrence, so the sum of such
sequences must too. This proves the first statement in the proposition.

The second statement follows from \cref{cor:linearRecurrence} and observing that
the $S_{ij}$ in \cref{eq:mainCharPoly} can be replaced by $\stat(C_{ij})$, since $\stat$ is linear.
The observation that it is enough to only consider tableaux where adjacent columns of same shape have identical fillings
is a consequence of the combinatorial interpretation of the $F_{k\alpha_i}^i(\zvec)$ in \cref{lem:columnsRecurrence} --- a block of size $k \alpha_i$
must have $\alpha_i$ copies of some column if $k$ is sufficiently large and now a similar inclusion-exclusion reasoning apply
as in \cref{lem:onetypeColumnRecurrence}.
\end{proof}

\begin{corollary}\label{cor:simpleRoots}
Let $(\stat,\statT)$ be an affine statistic,
such that the restriction to $\stat$ is linear and $\stat(C_1) \neq \stat(C_2) \Rightarrow C_1 \neq C_2$
for any pair of columns that appear in a filling in $\tabFamT$.
Then the characteristic polynomial in \eqref{eq:mainCharPolyLinear2} can be taken to have only simple roots.
\end{corollary}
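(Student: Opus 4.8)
The plan is to trace through the construction of $H_{kD}(\zvec)$ carried out in the proof of \cref{prop:mainProp} and check that the hypothesis on $\stat$ makes the simple-root property propagate through every recurrence operation used along the way. Recall that $H_{kD}(\zvec)$ is assembled as a finite sum of polynomials of the shape $G_{k\alpha}(\zvec)=\zvec^{A}\prod_{i=1}^{m}F^{i}_{k\alpha_i}(\zvec)$ from \cref{lem:columnsRecurrence}, one summand per admissible list of columns in which the columns $C_{i1},\dots,C_{il_i}$ within each block $i$ are pairwise distinct. I would split the exponent variables as $\zvec=(\xvec,\yvec)$ according to the two components, so that a block-root monomial takes the form $\zvec^{S_{ij}}=\xvec^{\stat(C_{ij})}\yvec^{u_{ij}}$, where the $\xvec$-exponent equals the column contribution $\stat(C_{ij})$ because $\stat$ is linear, and $u_{ij}$ is the corresponding affine contribution of $\statT$.

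First I would establish simple roots at the level of a single block. Fix $i$; the columns $C_{i1},\dots,C_{il_i}$ are pairwise distinct by construction, so the hypothesis that $\stat$ separates columns (distinct columns receive distinct $\stat$-values) gives that $\stat(C_{i1}),\dots,\stat(C_{il_i})$ are pairwise distinct vectors. Consequently the monomials $\zvec^{S_{i1}},\dots,\zvec^{S_{il_i}}$ are pairwise distinct, since already their $\xvec$-parts differ. By \cref{lem:onetypeColumnRecurrence} the characteristic polynomial of $F^{i}_{k}(\zvec)$ is $\prod_{j}\bigl(t-\zvec^{S_{ij}}\bigr)$, which therefore has only simple roots; this is exactly the distinctness clause invoked in \cref{cor:linearRecurrence}.

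Next I would propagate the simple-root property through the operations that build $G_{k\alpha}$ and then $H_{kD}$, using the closure properties recorded in \cref{sec:linearRecurrences}. Passing from $F^{i}_{k}$ to $F^{i}_{k\alpha_i}$ replaces each root $\zvec^{S_{ij}}$ by $\zvec^{\alpha_i S_{ij}}$, and as $\alpha_i\geq 1$ these stay pairwise distinct, so index scaling preserves simple roots. Multiplying the product $\prod_i F^{i}_{k\alpha_i}$ by the fixed polynomial $\zvec^{A}$ leaves the recurrence unchanged, and a product of sequences with simple roots again has simple roots (coincident root monomials merely merge coefficients, introducing no polynomial-in-$k$ factor); hence each $G_{k\alpha}$ has simple roots. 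Finally $H_{kD}$ is a finite sum of such $G_{k\alpha}$, and a sum of simple-root sequences has simple roots, so after collecting coincident factors the characteristic polynomial \eqref{eq:mainCharPolyLinear2} can be taken to have only simple roots.

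The only genuine obstacle is the bookkeeping forced by $\statT$ being merely affine rather than linear, which also means the roots are really those read off from the affine form \eqref{eq:mainCharPoly}. The point to isolate is that linearity is needed only for the \emph{separating} component $\stat$: it is the $\xvec$-exponents alone that witness distinctness of the block-root monomials, so the affine part $\statT$ — together with its constant offset, which is absorbed into the harmless prefactor $\zvec^{A}$ — never interferes. Once this is pinned down, the argument reduces to the closure of the simple-root property under multiplication by a polynomial, products, index scaling, and sums.
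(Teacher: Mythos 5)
Your proof is correct and follows essentially the same route as the paper: the separating hypothesis on $\stat$ (which, as you rightly read it, is meant in the direction $C_1\neq C_2\Rightarrow\stat(C_1)\neq\stat(C_2)$, the literal implication in the statement being vacuous) forces the $S_{ij}$ within each block to be distinct, so each $F^i_k$ and hence each $G_{k\alpha}$ has simple roots by \cref{lem:onetypeColumnRecurrence} and \cref{cor:linearRecurrence}, and the closure properties of \cref{sec:linearRecurrences} under scaling, products, and sums finish the argument. The paper's proof is just a compressed version of this; your unpacking of the role of the affine component $\statT$ and of coincident root monomials matches its intent.
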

\begin{proof}
The fact that $\stat(C_1) \neq \stat(C_2) \Rightarrow C_1 \neq C_2$ implies that
the values of the $S_{ij}$ in \eqref{eq:mainCharPoly} are all distinct.
Since $\{F_{kD}(\zvec)\}_{k=1}^\infty$ can be expressed as a sum of sequences,
each of which has simple roots in its characteristic polynomial,
the statement follows by using the theory in \cref{sec:linearRecurrences}.
\end{proof}

\section{Augmented fillings}

This section introduce the diagram fillings that are responsible for key polynomials, Demazure atoms and Hall--Littlewood polynomials.
We follow the terminology in \cite{Haglund2011463,Mason2009}, with a few minor modifications.

Let $\beta = (\beta_1,\dots,\beta_n)$ be a list of $n$ different positive integers
and let $\alpha=(\alpha_1,\dots,\alpha_n)$ be a weak integer composition, that is, a vector with non-negative integer entries.
An \defin{augmented filling} of shape $\alpha$ and \emph{basement} $\beta$
is a filling of a Young diagram of shape $(\alpha_1,\dotsc,\alpha_n)$ with positive integers,
augmented with a zeroth column filled from top to bottom with $\beta_1,\dotsc,\beta_n$.

\begin{definition}
Let $T$ be an augmented filling. Two boxes $a$, $b$, are \defin{attacking}
if $T(a)=T(b)$ and the boxes are either in the same column, or they are in adjacent columns, with the rightmost box in a row strictly below the other box.
\begin{align*}
\begin{ytableau}
a  \\
\none[\scriptstyle\vdots] \\
b  \\
\end{ytableau}\quad
\text{or}\quad
\begin{ytableau}
 a & \none \\
 \none[\scriptstyle\vdots] \\
  & b \\
\end{ytableau}
\end{align*}
\end{definition}
A filling is \defin{non-attacking} if there are no attacking pairs of boxes.

\begin{definition}
Let $T$ be an augmented filling with weakly decreasing rows.
An \defin{inversion triple of type $A$} is an arrangement of boxes, $a$, $b$, $c$,
located such that $a$ is immediately to the left of $b$, and $c$ is somewhere below $b$,
and the row containing $a$ and $b$ is at least as long as the row containing $c$
and $T(a) \geq T(c) \geq T(b)$.

Similarly, an \defin{inversion triple of type $B$} is an arrangement of boxes, $a$, $b$, $c$,
located such that $a$ is immediately to the left of $b$, and $c$ is somewhere above $a$,
and the row containing $a$ and $b$ is \emph{strictly} longer than the row containing $c$
and $T(a) \geq T(c) \geq T(b)$.
\begin{equation}\label{eq:invTriplets}
\text{Type $A$:}\quad
\ytableausetup{centertableaux,boxsize=1.2em}
\begin{ytableau}
 a & b \\
 \none  & \none[\scriptstyle\vdots] \\
\none & c \\
\end{ytableau}
\qquad
\text{Type $B$:}\quad
\ytableausetup{centertableaux,boxsize=1.2em}
\begin{ytableau}
 c & \none \\
\none[\scriptstyle\vdots]  & \none \\
a & b \\
\end{ytableau}
\end{equation}
\end{definition}
\textbf{Warning!} This definition slightly different from what is stated in \cite{Haglund2011463}.
However, the definitions coincides whenever the rows in the filling are weakly decreasing and we are only concerned with that
special case.

\begin{definition}
A \defin{semi-standard augmented filling}, (\ssaf) of \emph{shape} $\alpha$ and \emph{basement} $\beta$
is an augmented filling of shape $\alpha$ and basement $\beta$ with weakly decreasing rows and without any inversion triples.
\end{definition}
Note that this definition implies that there are no attacking boxes in an \ssaf.
In particular, all entries in every column are different.

\begin{example}\label{ex:generalAtomFilling}
Here is an example of a semi-standard augmented filling, with basement $(1,3,2,5,4)$.
\[
\ytableausetup{centertableaux,boxsize=1.2em}
\begin{ytableau}
\mathbf{1} & \none \\
\mathbf{3} & 3 & \underline{1} & 1 \\
\mathbf{2} & 2 \\
\mathbf{5} & 5 & 5 & 5\\
\mathbf{4} & 4 & \underline{4} & \underline{3} & 2
\end{ytableau}
\]
We can for example check the underlined entries for the type $B$ inversion triple condition --- since $4 \leq 1 \leq 3$ is \emph{not} true,
they do not form such a triple. It is left as an exercise to check that no other triples are inversion triples.
\end{example}

\begin{lemma}\label{lem:ssafIsCCandW}
The semi-standard augmented fillings is a weakly column-closed, well-behaved family. 
\end{lemma}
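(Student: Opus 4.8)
The plan is to verify the two assertions separately, leaning throughout on two structural facts enjoyed by every \ssaf{}: its rows are weakly decreasing (including the basement entry, which is the leftmost box of each row), and, since in $D_\alpha$ the set of rows occupied by column $j$ is $\{\,i : \alpha_i\ge j\,\}$ and this set shrinks as $j$ grows, the support of each column contains the support of every column to its right. The one quantitative observation I would prove first, and then reuse, is that duplicating or deleting a column preserves every pairwise comparison of row lengths: duplicating column $i$ sends the length $\alpha_r$ of row $r$ to $f(\alpha_r)$ with $f(x)=x+[x\ge i]$, and $f$ is strictly increasing, so for any two rows the relations ``at least as long as'' and ``strictly longer than'' are unchanged (deletion is the inverse move, so the same holds). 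Column operations act only on the non-basement columns, so the weakly-decreasing condition against the basement is never disturbed.

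For weak column-closedness I would reduce the $m_i\ge 1$ equivalence to two atomic moves---inserting a copy of a column next to itself, and deleting one of two adjacent equal columns---since $(m_1C_1,\dots,m_lC_l)$ is reached from $(C_1,\dots,C_l)$ by the former and recovered by the latter. That each move keeps rows weakly decreasing is immediate, as it only repeats or removes an already present value in each row. The real content is the inversion-triple condition, which I would handle with the evident value-, row- and support-preserving correspondence between the boxes before and after a move. When a column is duplicated, every adjacent pair of columns of the new filling matches an adjacent pair of the old one \emph{except} the duplicated pair itself; across that pair the two columns are equal, so $T(a)=T(b)$ would force $T(c)$ to equal both, which is impossible since the entries within a column are distinct. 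When a copy is deleted, the only adjacent pair of the smaller filling that fails to match an adjacent pair of the larger one is the pair that becomes adjacent where the copy was removed; I would lift a triple on it by putting its left box on the surviving copy, which is legitimate because that copy equals the left column. In every case the row-length comparison is preserved by the first step, so a type $A$ (resp.\ type $B$) triple on one side produces a type $A$ (resp.\ type $B$) triple on the other, contradicting that the starting filling is an \ssaf{}; this settles both implications.

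For well-behavedness I would argue directly from weak row-decrease and nested supports. If two columns of an \ssaf{} are identical they share a support, hence by nesting every column between them has that same support, and since each intermediate column is entrywise at most the left one and at least the right one, while the right one equals the left one, every intermediate column coincides with them. For the second condition, if $C_1$ stands left of $C_2$ and their supports differ then $\mathrm{supp}(C_1)\supsetneq\mathrm{supp}(C_2)$, and since supports only shrink from left to right this strict inclusion can never be reversed, so $C_1$ can never stand to the right of $C_2$; if instead they share a support, weak row-decrease gives $C_1\ge C_2$ entrywise with strict inequality somewhere, and the reversed order would demand the opposite inequality. The basement requires no separate treatment: it is the leftmost column of every \ssaf{}, so it is never to the right of anything, and the intermediate-column argument covers it verbatim.

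The step I expect to be the main obstacle is the inversion-triple bookkeeping in the column-closed part, where one must be sure that altering the number of columns does not quietly flip a row-length comparison and thereby create or destroy a triple. The strict monotonicity of $f$ is exactly what rules this out, and once it is in hand the remainder is a finite check on where the three boxes of a hypothetical triple sit relative to the duplicated or deleted column.
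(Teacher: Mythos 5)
Your proof is correct and its core coincides with the paper's: an inversion triple straddling a duplicated pair of identical columns forces $T(a)=T(c)=T(b)$, hence a repeated entry within a single column, contradicting the non-attacking property of an \ssaf{}. You go further than the printed proof, which records only this duplication step --- your row-length-preservation observation via $f(x)=x+[x\ge i]$, the deletion direction, and the explicit verification of well-behavedness from nested column supports are all details the paper leaves implicit (well-behavedness is dispatched earlier by the remark that weakly decreasing rows suffice), and they all check out.
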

\begin{proof}
It suffices to show that duplication of a column in a $\ssaf$ $T$
do not introduce any inversion triples and it is enough to check that there are no inversion triples in adjacent and identical columns.

Assume that $a$, $b$, $c$ form an inversion triple, as in \cref{eq:invTriplets} (either type).
Since the columns are identical, $T(a)=T(b)$ which implies $T(a)=T(c)=T(b)$.
However, this is means that two boxes in the same column are identical, so they are attacking.
This contradicts the fact that the filling is a $\ssaf$.
\end{proof}

\medskip

Let $\ssaf(\beta,\alpha)$ be the set of all semi-standard augmented fillings with basement $\beta$ and shape $\alpha$.
Note that $\ssaf(\beta,\alpha)$ is a finite set.
Given an augmented filling $T$, let $w(T) = (w_1,\dots,w_n)$ where $w_i$ count the number of boxes with content $i$
\emph{not including the basement}. 
The \defin{generalized Demazure atoms} are defined as
\begin{equation}\label{eq:atomPolyDef}
\atom_{\beta,\alpha}(\xvec) = \sum_{T \in \ssaf(\beta,\alpha)} \xvec^{w(T)}.
\end{equation}
The special case when $\beta_i = i$ corresponds to the ordinary Demazure atoms,
introduced by Lascoux and Schützenberger in \cite{Lascoux1990Keys} under the name \emph{standard bases}.

Let $\nawf(\alpha)$ denote the set of all non-attacking augmented fillings of shape $\alpha$ with weakly decreasing rows
and basement given by $\beta_i =i$.
The non-symmetric, integral form Hall--Littlewood polynomials, $E_\alpha(x_1,\dots,x_n)$, may be defined as
\begin{equation}\label{eq:HLPolyDef}
E_\alpha(\xvec;t) = \sum_{T \in \nawf(\alpha) } \xvec^{w(T)} t^{\coinv(T)} (1-t)^{\dn(T)}
\end{equation}
where $\coinv(T)$ is the number of triples in $T$ which are \emph{not} inversion triples,
and $\dn(T)$ is the number of pairs of adjacent boxes, $(i,j)$ and $(i,j+1)$ such that $T(i,j) \neq T(i,j+1)$ (different neighbors).
This formula was first given in \cite{Haglund2011463}.
It is straightforward to show that the $\nawf$  form a weakly column-closed and well-behaved family.
They show that the ordinary Hall--Littlewood polynomials $P_\mu(\xvec;t)$ can be expressed
as
\begin{equation}\label{eq:nonSymHLinHL}
 P_\mu(\xvec;t) = \sum_{\substack{\gamma \\ \mu = \lambda(\gamma) }} E_\gamma(\xvec;t)
\end{equation}
where $\lambda(\gamma)$ is the unique integer partition that is obtained from
the weak integer composition $\gamma$ by sorting the parts in decreasing order.

\begin{lemma}
The statistics $\dn$ and $\coinv$ are affine statistics.
\end{lemma}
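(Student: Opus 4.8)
The plan is to exhibit both statistics in the affine form of \cref{def:affine-statistic} by decomposing each as a sum of \emph{local} contributions, one for every pair of horizontally adjacent columns, and then checking that each local contribution is unaffected by column duplication. Fix a filling $(C_1,\dots,C_l)$ together with its basement and consider $(m_1C_1,\dots,m_lC_l)$ with $m_i\ge 1$. Because every filling here has left-justified composition shape, the column supports are nested, $\operatorname{supp}(C_1)\supseteq\dots\supseteq\operatorname{supp}(C_l)$, where $\operatorname{supp}(C_i)$ denotes the set of rows in which $C_i$ has a box.

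First I would isolate the one delicate point, which is also the main obstacle: the inversion-triple conditions in \cref{eq:invTriplets} compare the \emph{lengths} of two rows, while duplication lengthens every row simultaneously. I claim these comparisons are nevertheless stable. Setting $K_r=\max\{i : r\in\operatorname{supp}(C_i)\}$, nesting shows that in $(m_1C_1,\dots,m_lC_l)$ row $r$ has length $\sum_{i\le K_r}m_i$; since each $m_i\ge 1$, for any two rows $r,r'$ the relation ``row $r$ is at least as long as (resp.\ strictly longer than) row $r'$'' holds iff $K_r\ge K_{r'}$ (resp.\ $K_r>K_{r'}$), regardless of the actual multiplicities. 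Consequently, whether three prescribed boxes exist and satisfy the length hypothesis of a triple depends only on the fixed columns, not on the $m_i$.

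Granting this, I would note that each triple of either type, and each pair of neighbours counted by $\dn$, occupies exactly two adjacent columns; hence $\coinv$ and $\dn$ are sums over adjacent column pairs $(j,j+1)$ of a local count determined solely by the two column fillings and the (now stable) row-length comparisons. It then remains to group the adjacent pairs by block. A pair internal to a block consists of two identical copies of some $C_i$, there are $m_i-1$ of them in block $i$, and each contributes the same constant $\kappa_i$. For $\dn$ this constant is $0$, as identical columns have no differing neighbours. For $\coinv$ I would argue that between two identical columns no triple can be an inversion triple: the value requirement $T(a)\ge T(c)\ge T(b)$ would force $T(c)=T(a)=T(b)$, which is impossible since the entries in any single column are distinct; thus $\kappa_i$ is exactly the number of geometric triples between two adjacent copies of $C_i$. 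The remaining pairs---the block boundaries and the basement/first-column junction---are fixed in number and content and so contribute a constant.

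Collecting terms yields
\[
\coinv(m_1C_1,\dots,m_lC_l)=A+\sum_{i=1}^{l}\kappa_i\,m_i
\qquad\text{and}\qquad
\dn(m_1C_1,\dots,m_lC_l)=A',
\]
with $A,A',\kappa_i$ independent of the $m_i$, which are precisely the affine shapes required; in particular $\dn$ is affine with all slopes equal to zero. I expect the length-comparison stability to be the only real difficulty: once it is in place, the passage from $m_i-1$ interior pairs to the linear term $\kappa_i m_i$ is routine block bookkeeping of the same flavour as in \cref{lem:columnsRecurrence}.
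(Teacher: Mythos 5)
Your proof is correct and follows the route the paper intends: the paper's own proof merely observes that $\dn$ is unchanged by column duplication (hence affine with all slopes zero) and leaves $\coinv$ ``as an exercise to the reader.'' Your write-up supplies exactly the details the paper omits --- the stability of the row-length comparisons under duplication (via the nesting of column supports) and the fact that no triple between two identical adjacent columns can be an inversion triple --- so it is a complete version of the paper's sketch rather than a different argument.
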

\begin{proof}
It follows immediately from the definition of $\dn$
that if $\dn(C_1,\dots, C_l) = A$, then $\dn(m_1 C_1,\dots, m_lC_l) = A$ for all $m_i \geq 1$,
so this is affine.
The fact that $\coinv$ is affine is also quite straightforward and is left as an exercise to the reader.
\end{proof}

Using \cref{prop:mainProp}, \cref{lem:ssafIsCCandW} and \cref{cor:simpleRoots}, we have the following result:
\begin{corollary}
The sequences $\atom_{\beta,k\alpha}(\xvec)$ and $E_{k\alpha}(\xvec;t)$ for $k=1,2,\dotsc$ satisfy linear recurrences with simple roots.
\end{corollary}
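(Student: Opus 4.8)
The plan is to realize both $\atom_{\beta,k\alpha}$ and $E_{k\alpha}$ as instances of the polynomials $H_{kD}$ of \cref{prop:mainProp} and then read off the characteristic polynomial from \cref{cor:simpleRoots}. I would take $D=D_\alpha$, so that $kD=D_{k\alpha}$. For the atoms the relevant family is $\ssaf(\beta,\cdot)$ with the single statistic $w$, which is linear, and $\atom_{\beta,k\alpha}(\xvec)=H_{kD}(\xvec)$ once we fix any $n\geq \max_i \beta_i$ (weakly decreasing rows bound every entry of row $i$ by $\beta_i$, so such an $n$ captures every filling). For the Hall--Littlewood case the family is $\nawf(\cdot)$ and the statistic is the combined $\stat=(w,\coinv,\dn)$, whose restriction to $w$ is linear and whose $(\coinv,\dn)$-part is affine by the preceding lemma. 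The only wrinkle is that the summand $t^{\coinv(T)}(1-t)^{\dn(T)}$ is not a monomial; I would cure this by introducing a fresh variable $u$, setting $\zvec=(\xvec,t,u)$ and forming $\Phi_{k\alpha}(\xvec,t,u)=\sum_T \xvec^{w(T)}t^{\coinv(T)}u^{\dn(T)}$, so that $E_{k\alpha}(\xvec;t)=\Phi_{k\alpha}(\xvec,t,1-t)$.

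With the families identified, the recurrence is immediate: \cref{lem:ssafIsCCandW} says $\ssaf$ is well-behaved and weakly column-closed, the same being true of $\nawf$ as noted in the text, so \cref{prop:mainProp} applies and gives linear recurrences for $\{\atom_{\beta,k\alpha}\}$ and for $\{\Phi_{k\alpha}\}$. A linear recurrence is a polynomial identity in the ground variables, so it is preserved under the specialization $u\mapsto 1-t$; hence $\{E_{k\alpha}(\xvec;t)\}$ inherits a linear recurrence.

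For the simple-root claim I would invoke \cref{cor:simpleRoots}, whose hypothesis amounts to requiring that two \emph{different} columns of the \emph{same} shape that can sit together in a common filling already differ in the linear statistic $w$; this guarantees that the exponents $S_{ij}$ in one block of \cref{eq:mainCharPoly} are pairwise distinct. I would verify this directly from the weakly decreasing rows. If columns $C$ and $C'$ have the same shape and occur in one filling with $C$ somewhere to the left of $C'$, then in each row common to the two columns the entries weakly decrease from left to right, so $C$ dominates $C'$ entrywise. If in addition $w(C)=w(C')$, then $C$ and $C'$ carry the same multiset of entries, and a rearrangement of a fixed multiset that dominates another pointwise must be equal to it, forcing $C=C'$. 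Thus distinct co-occurring columns of equal shape have distinct weight, the $S_{ij}$ within each block are distinct, and \cref{cor:linearRecurrence,cor:simpleRoots} yield a simple-root characteristic polynomial for each term of the decomposition; summing the terms preserves simplicity because the sum rule of \cref{sec:linearRecurrences} only takes maxima of the (here unit) multiplicities.

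I expect the genuinely delicate point to be exactly this simple-root bookkeeping rather than the recurrence itself. One must resist asking for a \emph{global} injectivity of $w$ on columns: it is false, since a single value set can be arranged in two admissible ways that never appear together (the weakly decreasing condition is precisely what forbids their coexistence), and different shapes may share a weight. The correct, weaker statement — injectivity only among same-shape columns that co-occur — is exactly what the decomposition into $G_{k\alpha}$-terms in \cref{prop:mainProp} actually needs. For Hall--Littlewood there is the additional check that simplicity survives the substitution $u=1-t$; here it does, since $\xvec^{w}t^{c}(1-t)^{d}$ determines the triple $(w,c,d)$ (read off the lowest power of $t$ and the total $t$-degree), so distinct roots stay distinct, and any incidental coincidences among products of roots are removed exactly as in \cref{sec:linearRecurrences}.
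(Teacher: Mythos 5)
Your proposal is correct and follows essentially the same route as the paper, which proves the corollary simply by citing \cref{prop:mainProp}, \cref{lem:ssafIsCCandW} and \cref{cor:simpleRoots}. You supply details the paper leaves implicit --- the auxiliary variable $u$ with the specialization $u\mapsto 1-t$ to handle the non-monomial factor $(1-t)^{\dn(T)}$, and the verification (via weakly decreasing rows and entrywise domination) that the weight statistic is injective on co-occurring columns of equal shape, which is exactly the hypothesis of \cref{cor:simpleRoots} --- and these additions are sound.
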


Note that \cref{eq:nonSymHLinHL} implies that the ordinary Hall--Littlewood $P$ polynomials satisfy a linear recurrence.
These are usually (see \cite{Macdonald1995}) defined as
\begin{equation}\label{eq:HallLittlewoodPDefinition}
P_\lambda(\xvec;t) = \left( \prod_{i\geq 0} \prod_{j=1}^{m_\lambda(i)} \frac{1-t}{1-t^{j}} \right)
\sum_{\sigma \in \symS_n} \sigma\left(   x_1^{\lambda_1}\dotsm x_n^{\lambda_n} \prod_{i<j} \frac{x_i - tx_j}{x_i-x_j}     \right),
\end{equation}
where $\lambda = (\lambda_1,\dotsc,\lambda_n)$, some parts might be zero and $m_\lambda(i)$ denotes the number of parts of $\lambda$ equal to $i$,
and $\sigma$ act on the indexing of the variables.

Observe that from this definition, it is quite clear that $\{P_{k\lambda}(\xvec;t)\}_{k=1}^\infty$ satisfies a linear recurrence,
since for a fixed $\sigma$ in \eqref{eq:HallLittlewoodPDefinition}, the expression is of the form $g(\xvec;t) \sigma(\xvec)^{\lambda}$
where $g$ is independent under $\lambda \mapsto k\lambda$. Now compare with \cref{eq:generalForm} above.

\subsection{Key tableaux and key polynomials}

Let $\alpha = (\alpha_1,\dots,\alpha_n)$ be a weak integer composition.
To any such composition, construct a composition with unique entries, $\beta$, and a partition $\lambda$
as follows: Create an augmented Young diagram with shape $\alpha$ and fill the zeroth column with the numbers $1,\dots,n$ in decreasing order.
Remove all rows for which $\alpha_i=0$ and sort the remaining rows according to the number of boxes, in a decreasing manner.
If two rows has the same number of boxes, preserve the relative order\footnote{Although, in this paper, this convention will not be important.}.
The resulting diagram has the shape of a partition, $\lambda$, which we denote $\lambda(\alpha)$,
and the zeroth column will be our basement $\beta(\alpha)$.
It is easy to show that this process can be reversed, that is, to any pair $(\beta,\lambda)$, there is a corresponding $\alpha$.
Finally, note that $\beta(k\alpha) = \beta(\alpha)$ and $\lambda(k\alpha) = k \lambda(\alpha)$ for non-negative integers $k$.

This correspondence is illustrated in \cref{eq:alphabetalambda}, for $\alpha=(0,2,3,4,2,0,1)$
and the tuple $\beta=(4,5,6,3,1)$, $\lambda=(4,3,2,2,1)$.
\begin{equation}\label{eq:alphabetalambda}
\ytableausetup{centertableaux,boxsize=1.2em}
\begin{ytableau}
7             \\
6 &   &       \\
5 &   &   &   \\
4 &   &   &  &\\
3 &   &       \\
2             \\
1 &
\end{ytableau}
\quad \longleftrightarrow \quad
\begin{ytableau}
4 &   &   &  &\\
5 &   &   &   \\
6 &   &       \\
3 &   &       \\
1 &
\end{ytableau}
\end{equation}

The \defin{key polynomials} generalizes the Schur polynomials and are indexed by integer compositions.
They can be defined as
\begin{equation}\label{eq:keyPolyDef}
\key_\alpha(\xvec) = \sum_{T \in \ssaf(\beta(\alpha),\lambda(\alpha))} x^{w(T)}.
\end{equation}
Note that the key polynomials are a subset of the generalized Demazure atoms.
This motivates the definition of a \defin{key tableau} as a semi-standard augmented filling of partition shape,
and we let $\keytab(\beta,\lambda) = \ssaf(\beta,\lambda)$ to emphasize that this subset is of special interest.
Note that we only need to be concerned about inversion triples of type $A$ since we are dealing with partition shapes.
\medskip

Given a column $(\beta_1,\dots,\beta_n)$ and a set of entries $\{c_1,\dots,c_n\}$,
there is at most one one way to arrange the entries $c_1,\dots,c_n$ in a column next to $\beta$ such that the result fulfills all properties of a key tableau.
First of all, it is easy to see that a necessary condition is that $\beta_i \geq c_i$ for all $i$, for some enumeration of the $c_i$,
in order to have decreasing rows in the result.

Secondly, the lack of inversion triples in a key tableau implies that
the order of the entries in the second column is unique; if $(a,b,c)$
is an inversion triple of type $A$, then transposing the entries in box $b$ and $c$ yield a non-inversion triple.
This defines a total order among the elements in the second column,
so there can be at most one filling where the second
column (as a set) consists of the entries $c_1,\dots,c_n$.

The following lemma shows that under the obvious condition that if $\beta_i \leq c_i$ for all $i$, then the $c_i$ can be arranged in
such a way that the columns form a proper key tableau with basement $\beta$.
We prove a stronger statement:

\begin{lemma}\label{lem:sortingToKey}
Let $T$ be a Young diagram of partition shape $1+\lambda$, filled with positive integers in such a way that each row is weakly decreasing,
and each column contains unique entries, and the first column is given by $\beta$.
Then each column in $T$ can be sorted in a unique way such that the result is a key tableau of with shape $\lambda$ and basement $\beta$.
\end{lemma}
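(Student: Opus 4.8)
The plan is to process the columns of $T$ from left to right, reducing everything to a two-column problem. The key observation is that a type $A$ inversion triple $(a,b,c)$ lives entirely in two adjacent columns: $a$ sits immediately to the left of $b$, and $c$ lies below $b$ in the same column as $b$. Hence a filling of partition shape with weakly decreasing rows is a key tableau if and only if, for \emph{every} pair of adjacent columns, there is no type $A$ inversion triple (recall that type $B$ triples cannot occur in partition shape). So it suffices to understand, for a fixed left column $L=(L_1,\dots,L_p)$ and a prescribed set $M$ of $q\le p$ distinct entries, when and how $M$ can be arranged as a column $R=(R_1,\dots,R_q)$ to its right so that the rows are weakly decreasing ($R_i\le L_i$) and no type $A$ inversion triple occurs.

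First I would settle uniqueness, which is essentially the content of the discussion preceding the lemma. Suppose $R$ is such a valid arrangement. I claim that, reading from the top, $R_i$ must equal the largest element of $M$ not already used in rows $1,\dots,i-1$ that is $\le L_i$. Indeed, if some unused value $v\le L_i$ satisfied $v>R_i$, then $v=R_j$ for a lower row $j>i$, and then $(L_i,R_i,R_j)$ would satisfy $L_i\ge R_j\ge R_i$, i.e.\ it would be a type $A$ inversion triple, a contradiction. This forces $R$ entry by entry, so at most one valid arrangement of a given set next to a given left column exists. Running this left to right, the whole key tableau is determined by the basement $\beta$ together with the \emph{multiset of entries in each column}; since sorting a column only permutes its entries, the target key tableau is unique.

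For existence I would first record that whether $M$ admits \emph{some} weakly decreasing arrangement next to $L$ depends only on the sorted (descending) sequences of $L$ and $M$: such an arrangement exists iff $m_{(k)}\le L_{(k)}$ for all $k$, by the standard matching-the-$k$th-largest criterion. This quantity is insensitive to permuting $L$, so after I have finalized columns $0,\dots,c-1$ by re-sorting them, column $c$ is still feasible relative to the new column $c-1$, because the original $T$ had weakly decreasing rows and re-sorting did not change any column's multiset. Thus at each step I may start from any weakly decreasing arrangement $R$ of the current column next to its finalized left neighbor, and then repeatedly remove type $A$ inversion triples by the swap described in the text: if $(L_i,R_i,R_j)$ with $i<j$ is such a triple, interchange $R_i$ and $R_j$. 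This preserves weak decrease, since afterwards row $i$ holds $R_j\le L_i$ and row $j$ holds $R_i\le R_j\le L_j$; and it strictly decreases the potential $\sum_i i\,R_i$ by $(j-i)(R_j-R_i)>0$ (entries are distinct, so $R_j>R_i$), hence the process terminates in a weakly decreasing arrangement with no type $A$ inversion triples. Because only adjacent columns interact, finalizing the columns in this order yields a genuine key tableau.

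The main obstacle, and the point I would be most careful about, is the interaction of the two conditions across columns: re-sorting a column to cure its inversion triples with the left neighbor must not destroy the ability to sort the \emph{next} column. This is exactly what the multiset-invariance of feasibility resolves, since feasibility of column $c+1$ depends only on the sorted entries of column $c$, which are unchanged by re-sorting. The one subtlety to verify is that the swaps used to build column $c$ are performed while column $c$ is the rightmost finalized column, so they never need to respect a right neighbor; the right neighbor is instead sorted afterwards against the finalized column $c$. Everything else — checking that the terminal arrangement has distinct column entries (inherited from the hypothesis) and is non-attacking (automatic for an \ssaf{} of partition shape) — is routine.
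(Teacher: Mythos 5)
Your uniqueness argument is correct (it is essentially the greedy characterization the paper sketches just before the lemma), and your swap-plus-potential termination argument for a single pair of \emph{equal-height} columns is sound. The gap is in the step you yourself flag as the main obstacle: the claim that feasibility of column $c$ against the re-sorted column $c-1$ follows from multiset-invariance. The matching criterion $m_{(k)}\le L_{(k)}$ is insensitive to permuting $L$ only when the two columns have the same height. In a partition shape with $q=|M|<p=|L|$, the $q$ entries of $M$ must be placed in rows $1,\dots,q$, so feasibility depends on the sub-multiset $\{L_1,\dots,L_q\}$ occupying the \emph{top $q$ rows} of the finalized left column --- and re-sorting does change that sub-multiset. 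In the paper's own worked example the first column $(5,3,6,4)$ is re-sorted against the basement $(8,4,6,7)$ into $(6,4,5,3)$, so the top three rows change from $\{3,5,6\}$ to $\{4,5,6\}$; there feasibility happens to survive, but nothing in your argument guarantees this in general. The conclusion of the step is true (it must be, since the lemma holds), but to justify it you would need an additional fact, e.g.\ that greedy re-sorting never decreases the $k$-th largest entry among the top $q$ rows, and that does not follow from what you wrote.

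The paper's proof circumvents exactly this difficulty by a padding device: every column is first extended with fresh negative entries ($-i$ in row $i$) so that all columns acquire a common height. Then the sorting permutation used for column $c$ is applied to all subsequent columns as well, which preserves weak row-decrease globally and reduces everything to the equal-height situation where your multiset reasoning is valid; a final observation shows that the padding entries sink to the bottom of each sorted column, so deleting them recovers the original partition shape. If you want to keep your left-to-right finalization scheme without padding, you must supply the missing lemma about the top-$q$ sub-multiset being entrywise non-decreasing under re-sorting; otherwise adopting the padding reduction is the cleaner fix.
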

\begin{proof}
We do the proof in several steps. The first case we cover is the case when all parts in $\lambda$ has the same size, that is,
all columns of $T$ has the same height. It is easy to see that in this case, we only need to show the statement for two columns.
Thus, assume $(\beta_1,\dots,\beta_n)$ and $(c_1,\dots,c_n)$ are given, with $\beta_i \geq c_i$ for all $i$.

We now perform the following sorting procedure on the second column.
Let $c_i$ be the largest entry in the second column such that $\beta_1 \geq c_i$, and transpose $c_1$ and $c_i$.
Since $c_i \geq c_1$, the rows are still weakly decreasing after this transposition.
Note that $\beta_1$ and $c_i$ cannot be involved in an inversion triple later on:
if there is some $c_j$ such that $\beta_1 \geq c_j > c_i$, this would violate the maximality of the choice of $c_i$.

We now proceed recursively on the remaining entries of the two columns, $(\beta_2,\dots,\beta_n)$
and $(c_2,\dots,c_n)$ where we have performed a transposition in the second column.

\medskip

To handle tableaux with more than two columns, simply apply the permutation
that takes the original column $c$ to the result on all subsequent columns.
The result will now still be a tableau with weakly decreasing rows,
but the first two columns do not contain any inversion triples.
Proceed with the same method on column $2$ and $3$, then $3$ and $4$, and so on.

\medskip

Note that if $c_1 < c_2 < \dotsb < c_r \leq \beta_i$ for all $i$, and $c_r < c_j$ for all $j>r$,
then the second column after the above procedure will end in the sequence $c_r,c_{r-1},\dotsc, c_1$, reading from top to bottom.
Thus, to turn an arbitrarily shaped tableau $T$ into a key tableau, we first augment each column
with negative integers, such that all columns have the same height, and a new entry on row $i$ will get the value $-i$.
After performing the sorting procedure, the above observation implies that we can remove all
boxes with negative entries from the result and recover a key tableau with the same shape as $T$.
\end{proof}

Note that \cref{lem:sortingToKey} implies that if $T$ is a key tableau, then one can remove any column from it,
reorder the entries in each column and obtain another key tableau.
In some sense, key tableaux behave similarly to a strictly column-closed family of tableaux.

\begin{example}
Here we illustrate the sorting procedure described in \cref{lem:sortingToKey}.
We start with the tableau $T$ which is then augmented with negative integers.
\[
T=
\ytableausetup{centertableaux,boxsize=1.3em}
\begin{ytableau}
8 & 5 & 4 & 1 \\
4 & 3 & 2 & 2 \\
6 & 6 & 5     \\
7 & 4         \\
\end{ytableau}
\quad\quad
\longrightarrow
\quad\quad
\begin{ytableau}
8 & 5 & 4 & 1 \\
4 & 3 & 2 & 2 \\
6 & 6 & 5 &-3 \\
7 & 4 &-4 &-4 \\
\end{ytableau}.
\]
The second column is then sorted and the entries in the other columns are permuted in the same fashion.
Two more steps are performed to sort the remaining two columns.
\[
\begin{ytableau}
8 & 6 & 5 &-3 \\
4 & 4 &-4 &-4 \\
6 & 5 & 4 & 1 \\
7 & 3 & 2 & 2 \\
\end{ytableau}
\quad
\stackrel{\text{3rd column}}{\longrightarrow}
\quad
\begin{ytableau}
8 & 6 & 5 &-3 \\
4 & 4 & 4 & 1 \\
6 & 5 & 2 & 2 \\
7 & 3 &-4 &-4 \\
\end{ytableau}
\quad
\stackrel{\text{4th column}}{\longrightarrow}
\quad
\begin{ytableau}
8 & 6 & 5 & 2 \\
4 & 4 & 4 & 1 \\
6 & 5 & 2 &-3 \\
7 & 3 &-4 &-4 \\
\end{ytableau}
\]
Removing the boxes with negative entries now yield a proper key tableaux with the same shape and basement as $T$.
\end{example}

\begin{remark}
Note that \cref{lem:sortingToKey} does not generalize to arbitrary semi-standard augmented fillings.
For example, it is impossible to remove the first column in \cref{ex:generalAtomFilling}
and reorder the entries in the remaining non-basement columns into a valid SSAF --- the $1$s always appear in
some attacking configuration. 
\end{remark}

\subsection{Key polynomial recurrence}

We are now ready to state one of the main result of this paper.

\begin{theorem}\label{thm:Main}
The sequence of polynomials $\{\key_{k \alpha}(\xvec) \}_{k=1}^\infty$ satisfy a linear recurrence
with
\[
\prod_{T} (t - \xvec^T)
\]
as characteristic polynomial, where the product is taken over all key tableaux of shape $\alpha$
such that columns of equal height have the same filling and multiple roots in the product are ignored.
\end{theorem}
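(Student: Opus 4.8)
The plan is to realize $\key_{k\alpha}$ as one of the generating functions $H_{kD}$ of \cref{prop:mainProp} and then to translate the abstract index set of its characteristic polynomial into honest key tableaux. Fix $\alpha$ and set $\beta = \beta(\alpha)$, $\lambda = \lambda(\alpha)$ and $D = D_\lambda$. Since $\beta(k\alpha) = \beta$, $\lambda(k\alpha) = k\lambda$ and $D_{k\lambda} = kD$, the definition \eqref{eq:keyPolyDef} reads $\key_{k\alpha}(\xvec) = \sum_{T \in \keytab(\beta, k\lambda)} \xvec^{w(T)} = H_{kD}(\xvec)$, where $\tabFamT$ is the family of key tableaux with basement $\beta$ and the statistic is the weight $w$. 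First I would record that this family is weakly column-closed and well-behaved: it is the restriction of the semi-standard augmented fillings with basement $\beta$ to partition shapes, and duplicating or merging equal columns preserves partition shape, so this is inherited from \cref{lem:ssafIsCCandW}. Because the entries of $\beta$ lie in $\{1,\dots,n\}$ with $n = \length(\beta)$ and the rows are weakly decreasing, every entry is bounded by $n$, so $\keytab(\beta, k\lambda) = \tabFamT(kD, n)$ and \cref{prop:mainProp} applies. As $w$ is linear, it produces a linear recurrence whose characteristic polynomial is $\prod_T (t - \xvec^{w(T)})$, with $T$ ranging over fillings of shape $D$ whose adjacent equal-shape columns are identical and which arise from some $\tabFamT(kD,n)$ by deleting columns.

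Next I would identify this index set with the key tableaux in the statement. Since $D$ has partition shape, all columns of a given height form one contiguous block, so ``adjacent equal-shape columns are identical'' is precisely the condition that columns of equal height have the same filling. Such a $T$ need not itself be a key tableau, but by \cref{lem:sortingToKey} (and the remark following it) I may reorder the entries within each column to obtain a genuine key tableau $T'$ of the same shape $\lambda$ and basement $\beta$. Reordering within columns does not change $w$, so $\xvec^{w(T')} = \xvec^{w(T)}$, and a constant block stays constant, since once its first column is sorted against its left neighbor the remaining columns, having the same entry set, sort to the same column. Conversely every key tableau $T'$ of shape $\lambda$ with equal-height columns identical occurs: duplicating each of its columns $k$ times yields $kT' \in \keytab(\beta, k\lambda)$ by weak column-closure, and deleting the extra copies recovers $T'$. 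Hence the set of monomials $\{\xvec^{w(T)}\}$ coincides with $\{\xvec^{w(T')}\}$ as $T'$ runs over the key tableaux of shape $\alpha$ (the elements of $\keytab(\beta,\lambda)$) with columns of equal height equal, which is exactly the product \eqref{eq:mainCharPolyLinear2} claimed; a short computation shows that the exponential base attached to $T'$, namely $\xvec^{\alpha_1 w(C_1)+\dots}$ with $\alpha_i$ the multiplicity of the $i$th column, is simply $\xvec^{w(T')}$.

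The subtle point, which I would treat most carefully, is the assertion that multiple roots may be discarded, i.e. that the recurrence actually holds with the \emph{squarefree} polynomial $\prod_{T'}(t - \xvec^{w(T')})$. By \cref{cor:simpleRoots} this reduces to showing that any two distinct columns occurring together in a filling of $\tabFamT$ carry distinct weights. Columns of different heights are separated because $|w(C)|$ equals the height of $C$, so the only real case is two columns $C, C'$ of the same height with the same underlying set of entries appearing in a common key tableau. The key observation is that if $C$ lies to the left of $C'$, then weak decrease of rows forces $C_i \geq C'_i$ in every row $i$, and two arrangements of one and the same set that dominate each other componentwise must be equal; thus $C = C'$. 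Consequently, within each ``list'' of \cref{prop:mainProp} the weights of the columns of a fixed shape are distinct, each factor $F^i$ of \cref{lem:onetypeColumnRecurrence} has simple roots and is a pure exponential in $k$, and hence so are their product and the sum over lists, which justifies the squarefree characteristic polynomial.

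I expect the main obstacle to be exactly this last step, because \emph{globally} the weight is not injective on columns: the same three-element set can be arranged in two inequivalent legal ways against a fixed basement, for instance as $(3,5,2)$ and $(2,5,3)$ against the basement $(4,6,5)$, so one cannot invoke injectivity of $w$ naively. What saves the argument is that such a coincidence can never occur \emph{inside a single filling}, by the componentwise-domination remark above; the two columns just mentioned indeed fail to be comparable in either order and so never coexist in one key tableau. Once this separation is isolated, the remaining work -- matching the deletion-obtained fillings with key tableaux through \cref{lem:sortingToKey} and checking that a constant block sorts to a constant block -- is routine, and the theorem follows from \cref{cor:linearRecurrence} and \cref{cor:simpleRoots}.
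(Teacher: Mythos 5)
Your proposal is correct and follows essentially the same route as the paper: realize $\key_{k\alpha}$ as $H_{kD}$, apply \cref{prop:mainProp}, and use \cref{lem:sortingToKey} to turn the index tableaux of \cref{eq:mainCharPolyLinear2} into genuine key tableaux while preserving the weight. You supply considerably more detail than the paper's two-sentence proof, in particular the componentwise-domination argument showing that distinct columns of equal height occurring in a common filling have distinct weights, which is exactly the content the paper delegates to \cref{cor:simpleRoots} (though note your illustrative pair $(2,5,3)$ against basement $(4,6,5)$ is not actually a valid key column, so it does not exhibit the global non-injectivity you intended).
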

\begin{proof}
This follows almost immediately from \cref{prop:mainProp}, except that $\keytab$
is not a strictly column-closed family. However, \cref{lem:sortingToKey}
implies that the tableaux that appear in the product \cref{eq:mainCharPolyLinear2},
can be rearranged to key tableaux, while preserving the weight of the tableau.
\end{proof}

\section{The polytope side}

In this section, we show that the integer point transform of polytopes with
the \defin{integer decomposition property}, (IDP), satisfy a linear recurrence.
In particular, this can be used to give an alternate proof of \cref{thm:Main}.

An \emph{integral polytope} is the convex hull of a finite set of integer points in $\setR^d$.
The $k$-dilation of a polytope $\polyP$ is defined as $k \polyP = \{k \xvec : \xvec \in \polyP\}$
where $k$ is a non-negative integer, and it is easy to see that this is an integral polytope if $\polyP$ is.
Furthermore, a polytope $\polyP$ is said to have the \defin{integer decomposition property}
if for every integer $k\geq 1$, every lattice point $\xvec \in k\polyP \cap \setZ^d$ can be expressed
as $\xvec = \xvec_1 + \dots + \xvec_k$ with $\xvec_i \in \polyP$.
Note that only integral polytopes can have the integer decomposition property
and that every face of a polytope with IDP is also a polytope with the IDP.

The following proposition shows that certain polynomials obtained from polytopes
satisfies a linear recurrence. The argument is very similar to that in \cref{lem:onetypeColumnRecurrence}.
\begin{proposition}\label{prop:recurrenceFromIDP}
Let $\polyP$ be an integrally closed polytope in $\setR^d$ and let $p_k(\zvec)$ be the polynomial defined as
\begin{equation} \label{eq:integerPointTransform}
p_k(\zvec) = \sum_{\xvec \in k\polyP \cap \setZ^d} \zvec^\xvec.
\end{equation}
Then the sequence $p_j(\zvec)$ for $j=0,1,\dots$ satisfies a linear recurrence with characteristic polynomial
given by
\[
\prod_{\xvec \in \polyP \cap \setZ^d} (t-\zvec^\xvec).
\]
\end{proposition}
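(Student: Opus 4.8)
The plan is to mimic the inclusion--exclusion argument of \cref{lem:onetypeColumnRecurrence}, reading the lattice points of $\polyP$ as playing the role that columns played there. Let me enumerate the lattice points of $\polyP$ as $\vvec_1,\dots,\vvec_N$, so that the characteristic polynomial is $\prod_{i=1}^N(t-\zvec^{\vvec_i})$. The key structural input is the integer decomposition property: for each $k\geq 1$, every $\xvec \in k\polyP\cap\setZ^d$ admits a decomposition $\xvec = \vvec_{i_1}+\dots+\vvec_{i_k}$ into $k$ (not necessarily distinct) lattice points of $\polyP$. Consequently I can write
\[
p_k(\zvec) = \sum_{\xvec \in k\polyP \cap \setZ^d} \zvec^\xvec = \sum_{\substack{ m_1,\dots,m_N \geq 0 \\ m_1+\dots+m_N = k }} [\text{decomposition exists}]\, \zvec^{m_1 \vvec_1 + \dots + m_N \vvec_N},
\]
where the indicator accounts for the fact that a multiset of size $k$ of lattice points need not land in $k\polyP$ (it does by convexity), and — more importantly — that distinct multisets may sum to the same $\xvec$, so each lattice point of $k\polyP$ must be counted exactly once. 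This overcounting is precisely the obstacle the inclusion--exclusion resolves.

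The cleanest route, I expect, is to avoid re-deriving the combinatorics and instead invoke the generating-function bookkeeping already packaged in \cref{lem:onetypeColumnRecurrence}. First I would set up a formal correspondence: treat each lattice point $\vvec_i$ as a ``column'' $C_i$, let the family of ``fillings'' be the finite multisets of lattice points, declare it weakly column-closed, and take the linear statistic $\sigma(m_1 C_1,\dots,m_N C_N) = m_1\vvec_1 + \dots + m_N\vvec_N$, which is manifestly additive. The only genuine content to check is that summing $\zvec^{\sigma}$ over multisets of total size $k$, after correcting for repeated sums via inclusion--exclusion, reproduces $p_k(\zvec)$ — and this correction is exactly the same inclusion--exclusion as in the cited lemma, since IDP guarantees the surjectivity of (multisets of size $k$) onto ($k\polyP\cap\setZ^d$) that the lemma implicitly used for column duplications.

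The main obstacle is therefore bookkeeping rather than conceptual: one must verify that the map sending a multiset $\{\vvec_{i_1},\dots,\vvec_{i_k}\}$ to its sum $\xvec$ is surjective onto $k\polyP\cap\setZ^d$ (immediate from IDP) and that the inclusion--exclusion over which lattice points appear correctly weights each $\xvec$ by $1$. Once that is in place, \cref{lem:onetypeColumnRecurrence} applies verbatim with $S_i = \vvec_i$ and $l = N$, yielding a linear recurrence whose characteristic polynomial is
\[
\prod_{i=1}^N \bigl(t - \zvec^{\vvec_i}\bigr) = \prod_{\xvec \in \polyP \cap \setZ^d} \bigl(t - \zvec^{\xvec}\bigr),
\]
as claimed. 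I would close by remarking that, just as in the tableaux setting, distinct lattice points may give equal monomials $\zvec^{\vvec_i}$, in which case the repeated roots may be collapsed, but this does not affect the validity of the stated characteristic polynomial.
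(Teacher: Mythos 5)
Your overall strategy---an inclusion--exclusion argument modeled on \cref{lem:onetypeColumnRecurrence}, with the lattice points of $\polyP$ playing the role of columns---is exactly the spirit of the paper's proof (the paper announces this analogy just before the proposition). However, the specific reduction you propose, namely that \cref{lem:onetypeColumnRecurrence} ``applies verbatim with $S_i=\vvec_i$,'' does not go through, and the step you set aside as bookkeeping is in fact the entire content of the proof. In \cref{lem:onetypeColumnRecurrence} the generating function $F_k$ is a sum over tuples $(a_1,\dots,a_l)$, and distinct tuples correspond to distinct fillings: every object is represented by exactly one vector of column multiplicities. Under your dictionary, the sum over multisets of size $k$ is the complete homogeneous symmetric polynomial of degree $k$ in the monomials $\zvec^{\vvec_1},\dots,\zvec^{\vvec_N}$, which satisfies the stated recurrence for trivial reasons---but it is \emph{not} $p_k(\zvec)$, since a lattice point of $k\polyP$ may admit many decompositions while $p_k$ counts it exactly once. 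Nothing in \cref{lem:onetypeColumnRecurrence} collapses these multiplicities to $1$, so the lemma cannot be invoked for $p_k$ itself; you flag this issue but never resolve it, and resolving it is not a formality.

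What the paper does instead is run the inclusion--exclusion directly on $p_k$: the IDP guarantees that every lattice point of $k\polyP$ splits off at least one lattice point of $\polyP$, so $p_k - e_1 p_{k-1}$ (where $e_j$ denotes the $j$th elementary symmetric polynomial in the monomials $\zvec^{\vvec_i}$) has only negative coefficients, supported on the points admitting at least two such splittings; adding $e_2 p_{k-2}$ overshoots to the points admitting at least three, and the alternating sum $\sum_j (-1)^j e_j p_{k-j}$ eventually vanishes. That argument, applied to the set of \emph{distinct} generators a given point of $k\polyP$ can split off, is what weights each $\xvec$ by exactly $1$. To repair your write-up you would need to either carry out this count explicitly or prove that your multiset-to-point correction really does reduce to it; as written, the proposal proves the recurrence only for the wrong generating function.
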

\begin{proof}
Since $\polyP$ has the IDP, one can easily show that
every lattice point in $k\polyP$ can be expressed as a sum of
a lattice point in $(k-1)\polyP$ plus a lattice point in $\polyP$.
Therefore
\[
p_k(\zvec) - \left( \sum_{\xvec \in \polyP \cap \setZ^d} \zvec^\xvec  \right) p_{k-1}(\tvec)
\]
is a polynomial with only \emph{negative} coefficients corresponding to points in $k\polyP$
that are expressible in more than one way as $\xvec + \yvec$
with $\xvec$ in $(k-1)\polyP \cap  \setZ^d$ and $\yvec$ in $\polyP \cap  \setZ^d$.
Hene
\[
p_k(\zvec) - \left( \sum_{\xvec \in \polyP \cap \setZ^d} \zvec^\xvec  \right) p_{k-1}(\zvec)
+ \left( \sum_{\xvec \neq \yvec \in \polyP \cap \setZ^d} \zvec^\xvec \cdot \zvec^\yvec  \right) p_{k-2}(\zvec)
\]
is again a polynomial with \emph{positive} coefficient corresponding to
lattice points in $k\polyP$ expressible in at least three different ways.
Repeating this argument using the principle of inclusion-exclusion then yield the desired formula.
\end{proof}

The polynomial defined in \cref{eq:integerPointTransform} for $k=1$ is
commonly known as the \defin{integer-point transform} of $\polyP$.

The intersection of two faces of a polytope is also a face (of possibly lower dimension) of the polytope.
This enables us to generalize \cref{prop:recurrenceFromIDP} slightly:
\begin{corollary}\label{cor:integerPointTransformFaces}
Let $\polyP$ be a polytope with the integer decomposition property
and let $F_1,\dots,F_l$ be faces of $\polyP$. Define the sequence of polynomials
\begin{equation} \label{eq:integerPointTransform2}
p_k(\zvec) = \sum_{\xvec \in k(\cup_i F_i) \cap \setZ^d} \zvec^\xvec.
\end{equation}
Then the sequence $p_j(\zvec)$ for $j=0,1,\dots$ satisfies a linear recurrence with characteristic polynomial
given by
\[
\prod_{\xvec \in (\cup_i F_i) \cap \setZ^d} (t-\zvec^\xvec).
\]
\end{corollary}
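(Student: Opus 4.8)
The plan is to reduce the corollary to \cref{prop:recurrenceFromIDP} by an inclusion--exclusion over the faces $F_1,\dots,F_l$. The two geometric facts I would use are that scalar dilation commutes with finite unions and, for $k\geq 1$, with finite intersections, so that $k\bigl(\bigcup_i F_i\bigr) = \bigcup_i kF_i$ and $\bigcap_{i\in I} kF_i = k\bigl(\bigcap_{i\in I} F_i\bigr)$, together with the fact recalled just before the statement that the intersection of faces of $\polyP$ is again a face of $\polyP$. Since every face of an IDP polytope again has the integer decomposition property, each $\bigcap_{i\in I} F_i$ is an IDP polytope (possibly empty) to which \cref{prop:recurrenceFromIDP} applies directly.

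Concretely, I would first apply inclusion--exclusion to the integer-point transform of the union: for each $k$,
\[
p_k(\zvec) = \sum_{\emptyset \neq I \subseteq \{1,\dots,l\}} (-1)^{|I|+1}\, q^I_k(\zvec), \qquad q^I_k(\zvec) := \!\!\!\sum_{\xvec \in k(\bigcap_{i\in I} F_i)\cap \setZ^d}\!\!\! \zvec^\xvec,
\]
using the two commutation identities above to rewrite the intersection of the dilated faces as the dilation of the (face) intersection. Next, for each nonempty $I$ I would invoke \cref{prop:recurrenceFromIDP} applied to the IDP polytope $\bigcap_{i\in I}F_i$: the sequence $\{q^I_k(\zvec)\}_k$ satisfies a linear recurrence whose characteristic polynomial is $\prod_{\xvec \in (\bigcap_{i\in I}F_i)\cap\setZ^d}(t-\zvec^\xvec)$.

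The remaining step is to combine these. Because distinct lattice points give distinct monomials $\zvec^\xvec$, and $(\bigcap_{i\in I}F_i)\cap\setZ^d \subseteq (\bigcup_i F_i)\cap\setZ^d$, the characteristic polynomial of each $q^I_k$ is a sub-product of, and hence divides, $\prod_{\xvec\in(\bigcup_i F_i)\cap\setZ^d}(t-\zvec^\xvec)$; so every $q^I_k$ already satisfies the recurrence with this larger characteristic polynomial. By the closure properties in \cref{sec:linearRecurrences} --- a constant multiple of a sequence satisfies the same recurrence, and a sum of sequences satisfies the recurrence obtained by taking the maximal multiplicity of each root (here each multiplicity is $1$) --- the signed sum $p_k(\zvec)$ satisfies the same recurrence, with characteristic polynomial $\prod_{\xvec\in(\bigcup_i F_i)\cap\setZ^d}(t-\zvec^\xvec)$, as claimed.

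I expect the only real subtlety to be bookkeeping rather than a genuine obstacle: one must check that the dilation/union/intersection identities hold for all $k\geq 1$ (allowing empty intersections, for which $q^I_k\equiv 0$ and the corresponding factor is vacuous), and one must verify the base cases so that the recurrence holds over the full range $k=0,1,\dots$ exactly as in \cref{prop:recurrenceFromIDP}. The key conceptual input --- that intersections of faces are faces and therefore inherit the IDP --- is precisely what lets every inclusion--exclusion term fall within the scope of \cref{prop:recurrenceFromIDP}.
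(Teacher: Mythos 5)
Your proposal is correct and follows essentially the same route as the paper: the paper's proof is a one-line sketch that expresses $p_k$ via inclusion--exclusion as a signed sum of integer-point transforms of faces (using that intersections of faces are faces with the IDP) and then invokes \cref{prop:recurrenceFromIDP} together with the closure properties of linear recurrences. You have simply filled in the bookkeeping (the dilation/intersection identities and the divisibility of the characteristic polynomials) that the paper leaves implicit.
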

\begin{proof}
The integer point transform of each face $F_i$ satisfy a linear recurrence under dilation by $k$,
and the polynomials in \cref{eq:integerPointTransform2} can be expressed (via inclusion-exclusion)
as a linear combination of such integer point transforms of faces.
\end{proof}

In \cite{Kiritchenko2010}, it was proven that key polynomials (Demazure characters)
can be expressed as (a certain specialization of) the integer point transform of a
union of faces of a Gelfand--Tsetlin polytope.
Such polytopes are known to have the integer decomposition property, see \emph{e.g.} \cite{Alexandersson20141},
so \cref{cor:integerPointTransformFaces} implies a weaker version of \cref{thm:Main}.
Note that the linear recurrences allow us to define $\key_{0\alpha}(\xvec)$ and it follows from the polyhedral complex interpretation
that this always evaluates to $1$ (there is exactly one lattice point in the union of faces with dilation $0$, namely the origin).
It would be interesting to see a direct proof of this fact without using the polytope interpretation.
\medskip

After extensive computer experimentation, it is hard not to ask the following question:
\begin{question}
Does the polynomial $k \mapsto \key_{k\alpha}(1^n)$ always have non-negative coefficients?
\end{question}
The case when $\alpha$ is a partition corresponds to a Schur polynomial and it is known that
\[
 \schur_\lambda(1^n) = \prod_{1\leq i<j \leq n} \frac{\lambda_i-\lambda_j + j-i}{j-i}
\]
where $n$ is the number of variables. This gives a positive answer to the question in this case.

\section{The operator side}

Some families of polynomials, such as the Schubert and key polynomials can be defined via \defin{divided difference operators}.
Let $s_i$ denote the transposition $(i,i+1)$ and let such transpositions act on $\setZ[x_1,\dotsc,]$ by permuting the indices of the variables.
Define the divided difference operators
\[
 \partial_i = \frac{1-s_i}{x_i-x_{i+1}}, \quad \pi_i = \partial_i x_i.
\]
Given a permutation $\omega \in \symS_n$, it can be expressed as a product of transpositions,
$\omega = s_{i_1}\dotsm s_{i_l}$. When the length $l$ is minimal, we say that $i_1i_2\dots i_l$ is a \defin{reduced word} of $\omega$.
Then, let $\partial_\pi = \partial_{i_1}\dotsm \partial_{i_l}$ and $\pi_\omega = \pi_{i_1}\dotsm \pi_{i_l}$.
It can be shown that these operators does not depend on the choice of the reduced word.

The key polynomials may now be defined \cite{ReinerShimozono1995} as $\key_\alpha(\xvec)  = \pi_{u(\alpha)} x^{\lambda(\alpha)}$,
where $\lambda(\alpha)$ is the partition obtained by sorting the parts of $\alpha$ in decreasing order
and $u(\alpha)$ is a permutation that sorts $\alpha$ into a partition shape.
That this indeed is equivalent to the definition above was proved in \cite{Mason2009}.
We will now give yet another proof that the key polynomials satisfy linear recurrences.
First, note that $\xvec^{k\lambda}$ is a geometric series as $k=0,1,\dotsc$ and thus satisfy a linear recurrence
with characteristic polynomial $t-\xvec^\lambda$. Now note that if $\{f_k(\xvec)\}_{k=0}^\infty$ satisfy a linear recurrence,
then so does $\{\partial_i f_k(\xvec)\}_{k=0}^\infty$ and $\{\pi_i f_k(\xvec)\}_{k=0}^\infty$. The result is now a consequence of induction.

The \defin{Schubert polynomials}, $\schubert_\omega(\xvec)$, indexed by permutations in $\symS_n$, are defined in a similar fashion,
\[
 \schubert_\omega(\xvec) =  \partial_{(w^{-1}w_0)} x_1^{n-1}x_2^{n-2}\dotsm x_{n-1}^{1}
\]
where $\omega_0$ is the longest permutation in $\symS_n$, namely $(n,n-1,\dotsc,1)$ in one-line notation.
Using a similar reasoning as for the key polynomials, one can produce sequences of Schubert polynomials that satisfies
linear recurrences.

\section{Appendix: Some families of column-closed fillings}

In this section, we review some common families of
column-closed fillings, related combinatorial statistics and generating functions over subsets of such families.
Some statements here are well-known or very easy to show, so we present them without proof.
\cref{prop:mainProp} implies that all these polynomials we define below satisfy linear recurrences.

\subsection{Flagged skew semi-standard Young tableaux}

Let $\lambda$ and $\mu$ be partitions with at most $l$ parts, such that $\lambda \supseteq \mu$.
Let $\ssyt(\lambda/\mu,n)$ be the set of fillings of $D_{\lambda/\mu}$ with entries in $[n]$,
such that each row is weakly increasing and each column is strictly increasing.
Then for every $l$ and $n$, the families
\[
\bigcup_{\lambda \supseteq \mu} \ssyt(\lambda/\mu,n) \text{ and  } \bigcup_{\lambda} \ssyt(\lambda,n)
\]
are strictly column-closed families, where the unions are taken over shapes
with at most $l$ rows.
On any filling $T$ with entries in $[n]$, we define the statistic $w(T):\tabFamT \to \setN^n$
such that if $w(T) = (w_1,\dots,w_n)$, then $w_i$ is the number of boxes in $T$ filled with $i$.
It is evident that $w$ is a linear statistic.

Finally, the skew Schur polynomials in $n$ variables, indexed by skew partition shapes $\lambda/\mu$, are defined as
\[
 \schur_{\lambda/\mu}(\xvec) = \sum_{T \in \ssyt(\lambda/\mu,n)} \xvec^{w(T)}.
\]

\bigskip

Even more general, let $\lambda \supseteq \mu$ be a shapes with at most $l$ rows
and let $a$ and $b$ be increasing sequences of integers of length $l$, such that $a_i \leq b_i$ for all $i$.
Let $\ssyt(\lambda/\mu,a,b,n) \subseteq \ssyt(\lambda/\mu,n)$ be the subset of fillings $T$,
such that $a_i \leq T(i,j) \leq b_i$ for every box $(i,j) \in D_{\lambda/\mu}$.
Then for each $n$,
\[
\bigcup_{\lambda \supseteq \mu} \ssyt(\lambda/\mu,a,b,n)
\]
is a strictly column-closed family, where the union is taken over all $\lambda \supseteq \mu$ with at most $l$ rows.
The \defin{row-flagged} Schur polynomials, $s_{\lambda/\mu,a,b}(\xvec)$ in $n$ variables are defined as
\[
 \schur_{\lambda/\mu,a,b}(\xvec) = \sum_{T \in \ssyt(\lambda/\mu,a,b,n)} \xvec^{w(T)},
\]
see \emph{e.g.} \cite{Wachs1985} as a reference.

\subsection{Symplectic fillings}

The following definition is taken from \cite{King1976} and these polynomials are related to representations of $Sp(2n)$.
The \defin{symplectic Schur polynomials}, $\schurSp_\lambda(\xvec)$, in the variables $x_1^{\pm 1},x_2^{\pm 1},\dots,x_n^{\pm 1}$
are defined via fillings of the Young diagram $\lambda$ using the alphabet $1 < \overline{1} < 2 < \overline{2}<\dotsb < n < \overline{n}$
such that rows are weakly increasing, columns are strictly increasing, and entries in row $i$ are greater than or equal to $i$.
Then, for a partition $\lambda$ with at most $n$ parts,
\[
\schurSp_\lambda(\xvec) = \sum_{T \in \spyt(\lambda)} \xvec^{w(T)} \xvec^{-\overline{w}(T)}
\]
where $w(T)$ is the weight only counting unbarred entries and $\overline{w}(T)$ only counts the barred entries.
It is quite clear that the symplectic Young tableaux form a strictly column-closed family,
and that the statistics $w$ and $\overline{w}$ are linear.
Consequently, $\{\schurSp_{k\lambda}\}_{k=1}^\infty$ satisfies a linear recurrence for every fixed partition $\lambda$.

\subsection{Set-valued tableaux and reverse plane partitions}

The \defin{Grothendieck polynomials}\footnote{These are called the \emph{stable} Grothendieck polynomials.} $\grothG_\lambda(\xvec)$ can be defined (see \cite{Buch2002}) as
\[
 \grothG_\lambda(\xvec) = \sum_{T \in \svt(\lambda)} (-1)^{|T|-|\lambda|}\xvec^{w(T)}
\]
where the sum is taken over \defin{set-valued Young tableaux}. These are defined as
fillings of a diagram of shape $\lambda$, but now each box contains a \emph{set} of natural numbers.
For two such sets $A$, $B$ we have $A<B$ if $\max A < \min B$ and similar for $A \leq B$.
With this notation, $\svt(\lambda)$ is the set of all set-valued tableaux (subsets of $[n]$) such that rows are weakly increasing,
and columns are strictly increasing. Here, the $i$th component of $w(T)$ is now the total number of sets where $i$ appears,
and $|T|$ is the sum over all cardinalities of the sets in the boxes.
Note that the lowest-degree part of $\grothG_\lambda(\xvec)$ is the usual Schur polynomial $\schur_\lambda(\xvec)$.

There is also an operator definition of the more
general Grothendieck polynomials which are indexed by permutations and similar to the Schubert polynomials
and introduced by Lascoux and Schützenberger in 1982.

\medskip

To show that $\grothG_\lambda(\xvec)$ satisfy a linear recurrence, one needs to use the more general version of \cref{lem:columnsRecurrence},
since the family of set-valued Young tableaux is not a weakly column-closed family; only columns where each set is a singleton can be duplicated.
However, we note that the family is well-behaved and that every tableau $T \in \svt(k\lambda)$ contains duplicate columns for every $k$
sufficiently large. These observations together with \cref{lem:columnsRecurrence} allows us to deduce that
the Grothendieck polynomials also satisfy a linear recurrence. We leave the details as an exercise to the reader.
This can also be proved using the divided difference operator definition, similar to the Schubert polynomials.

\medskip

Lam and Pylyavskyy \cite{Lam07combinatorialhopf} proved that the \defin{dual stable Grothendieck polynomials}, $\grothg_\lambda(\xvec)$ in $n$ variables
can be defined as
\[
 \grothg_\lambda(\xvec) = \sum_{T \in \rpp(\lambda)} \xvec^{ev(T)}
\]
where $\rpp(\lambda)$ is the set of \defin{reverse plane partitions} of shape $\lambda$,
that is, fillings of $\lambda$ with numbers in $[n]$ such that rows and columns are weakly decreasing.
The statistic $ev(T)_i$ is the total number of columns where $i$ appears.
Evidently, $ev$ is a linear statistic and reverse plane partitions is a well-behaved, strictly column-closed family of fillings.
Consequently, we get a linear recurrence in this case.

\medskip

\subsection{A note on Jack and Macdonald polynomials}

A consequence of satisfying a linear recurrence, is that the sequence of polynomials must satisfy a linear recurrence
under every specialization of the variables. In particular, if we pick a specialization such that all roots of the characteristic polynomial
become equal to $1$, then the resulting sequence is a polynomial.
For example, $k\mapsto \key_{k\alpha}(1^n)$ is a polynomial in $k$.

This observation allows us to deduce that the Jack polynomials $J_\lambda(\xvec,a)$ do \emph{not} satisfy a linear recurrence
for general values of $a$, the sequences $J_{k\lambda}(1^n,a)$ are not of the form given in \cref{eq:generalForm}.
This observation holds for both standard normalizations of Jack polynomials.

It follows that there are no linear recursions for Macdonald polynomials either,
since the Jack polynomials are a specialization of the Macdonald polynomials.

\bibliographystyle{amsalpha}
\bibliography{bibliography}

\end{document}